\newtheorem{proposition}{Proposition}[section]
\newtheorem{theorem}[proposition]{Theorem}
\newtheorem{remark}[proposition]{Remark}
\newtheorem{lemma}[proposition]{Lemma}
\newtheorem{corollary}[proposition]{Corollary}
\newtheorem{conjecture}[proposition]{Conjecture}
\newcommand{\nth}[1]{$#1 {\rm - th }$}
\newcommand{\rad}{{\rm rad } \ }
\newcommand{\rk}{{\rm rank } \ }
\newcommand{\Z}{\mathbb{Z}}
\newcommand{\ZM}[1]{\Z /( #1 \cdot \Z)}
\newcommand{\ZMs}[1]{(\Z / #1 \cdot \Z)^*}
\newcommand{\rg}[1]{\mbox{\bf #1}}
\newcommand{\eu}[1]{\mathfrak{#1}}
\newcommand{\id}[1]{\mathcal{#1}}
\newcommand{\Gal}{\mbox{Gal}}
\newcommand{\GL}{\mbox{GL}}
\newcommand{\rf}[1]{(\ref{#1})}
\newcommand{\Norm}{\mbox{\bf N}}
\newcommand{\lchooses}[2]{\left( \frac{#1}{#2 } \right)} 
\newcommand{\F}{\mathbb{F}}
\newcommand{\K}{\mathbb{K}}
\newcommand{\Q}{\mathbb{Q}}
\newcommand{\R}{\mathbb{R}}
\newcommand{\C}{\mathbb{C}}
\newcommand{\N}{\mathbb{N}}
\def\ra{\rightarrow}
\begin{document}

\title {On the equation $X^n - 1 = B Z^n$} 
\author{B. BARTOLOM\'{E} \and P. MIH\u{A}ILESCU}

%\institute{   Mathematisches Institut der Universit\"at G\"ottingen\\
%			  Bunsenstra{\ss}e 3-5, 37073 G\"ottingen, Germany \\
%              Tel.: +49 551 397756\\
%              Fax: +49 551 392985\\
%              \email{Boris.Bartolome@stud.uni-goettingen.de}  \\
%              \email{Preda@uni-math.gwdg.de}}

{\obeylines \small
\vspace*{0.2cm}
\hspace*{4cm}When it all comes down to dust
\hspace*{4cm}I will kill you if I must,
\hspace*{4cm}I will help you if I can.
\hspace*{4cm}And mercy on our uniform,
\hspace*{4cm}man of peace or man of war,
\hspace*{4cm}the peacock spreads his fan. 
\vspace*{0.8cm}
\hspace*{5.2cm} To Yuri Bilu, colleague and 
\hspace*{6.5cm} friend for all seasons.\footnote{$ \ $ ''Story of Isaac'',  by Leonard Cohen.}
\smallskip}

\vspace*{0.5cm}
\date{Version 1.13 \today}

\begin{abstract}
  We consider the Diophantine equation $X^n - 1 = B Z^n$, where $B \in
  \Z$ is understood as a parameter. We prove that if the equation has
  a solution, then either the Euler totient of the radical,
  $\varphi(\rad(B))$, has a common divisor with the exponent $n$, or
  the exponent is a prime and the solution stems from a solution to
  the diagonal case of the Nagell--Ljunggren equation: $\frac{X^n -
    1}{X- 1} = n^e Y^n, \ e \in \{0, 1\}$. This allows us to apply
  recent results on this equation to the binary Thue equation in
  question. In particular, we can then display parametrized families
  for which the Thue equation has no solution. The first such family
  was proved by Bennett in his seminal paper on binary Thue equations
  \cite{Be}.
\end{abstract}

\maketitle

\keywords{Binary Thue Equation, Nagell-Ljunggren Equation}
%\subclass{11D41, 11D61}

\section{Introduction}
Let $B, n \in \N_{>1}$ be such that 
\begin{eqnarray}
  \label{nosplit}
 \varphi^{*}(B) := \varphi(\rad(B)) \quad \hbox{ and } \quad (n,
 \varphi^*(B)) = 1.
\end{eqnarray}
Here $\rad(B)$ is the radical of $B$ and the condition implies that
$B$ has no prime factors $t \equiv 1 \bmod n$. In particular, none of
its prime factors splits completely in the \nth{n} cyclotomic field.

More generally, for a fixed $B \in \N_{> 1}$ we let 
\begin{eqnarray}
\label{expos}
\id{N}(B) = \{ n \in \N_{ > 1} \ | \ \exists \ k > 0 \hbox{ such that } n | {\varphi^*(B)}^k \}. 
\end{eqnarray}

If $p$ is an odd prime, we shall denote by CF the combined condition
requiring that
\begin{enumerate}[I]
\item \label{i} The Vandiver Conjecture holds for $p$, so the class
  number $h_p^+$ of the maximal real subfield of the cyclotomic field
  $\Q[\zeta_p]$ is not divisible by $p$.
\item \label{ii} The index of irregularity of $p$ is small, namely $i_r(p)
  < \sqrt{p} -1$, so there are $i_r(p)$ odd integers $k < p$ such that
  the Bernoulli number $B_k \equiv 0 \bmod p$.
\end{enumerate}
The second condition was discovered by Eichler, as a sufficient
condition for FLT1 to be true. It is known from recent computations of
Buhler and Harvey \cite{BH} that the condition CF is satisfied by
primes up to $163 \cdot 10^6$.

We consider the binary Thue equation
\begin{eqnarray}
\label{bin}
X^n - 1 & = & B \cdot Z^n,
\end{eqnarray}
where solutions with $Z \in \{ -1, 0, 1\}$ are considered to be
trivial. The assertion that equation \rf{bin} has finitely many solutions 
other than the trivial ones is a special case of the general Pillai conjecture
(Conjecture 13.17 of \cite{Bilu}). This equation is encountered as a
particular case of binary Thue equations of the type
\begin{eqnarray}
\label{mike} 
 a X^n - b Y^n = c, 
\end{eqnarray}
see \cite{BGMP}. In a seminal paper \cite{Be}, Michael Bennett proves
that in the case of $c = \pm 1$ there is at most one solution for
fixed $(a, b; n)$ and deduces that the parametric family $(a+1, a; n)$
has the only solution $(1,1)$ for all $n$.  The equation \rf{bin}
inserts naturally in the family of equations \rf{mike}, with $a = c =
\pm 1$.

A conjecture related directly to \rf{bin} states that 
\begin{conjecture}
\label{bt}
Under \rf{nosplit}, Equation \rf{bin} has no other non-trivial solution than $(X, Y; B, n) =
(18, 7; 17, 3)$.
\end{conjecture}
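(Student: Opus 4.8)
The plan is to reduce the statement, by an elementary factorisation, to the diagonal case of the Nagell--Ljunggren equation $\frac{X^{p}-1}{X-1}=p^{e}Y^{p}$ with $e\in\{0,1\}$, and then to quote the (conjectural, and so far only conditionally established) classification of its solutions. First I would pass to prime exponent: if an odd prime $p$ divides $n$, write $n=pm$ and put $U=X^{m}$, $W=Z^{m}$, so that $U^{p}-1=B\,W^{p}$ is again a non-trivial solution of \rf{bin} and $(p,\varphi^{*}(B))=1$, since $p\mid n$; it then suffices to treat the prime-exponent case and afterwards recover $n$ from the value of $U$ produced. The case where $n$ is a power of $2$ is genuinely exceptional: for $n=2$ condition \rf{nosplit} only forces $B$ to be a power of $2$, and $X^{2}-1=2^{j}Z^{2}$ already admits the non-trivial solution $(17,3;32,2)$, so the statement is to be understood with $n\ge 3$; and for $n=2^{k}$, $k\ge 2$, the same descent leaves the single auxiliary equation $V^{4}-1=2^{j}W^{2}$ (genus $\le 1$), which I would dispose of by the standard observation that $\gcd(V^{2}-1,V^{2}+1)\mid 2$.

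For $n=p$ an odd prime I would factor $X^{p}-1=(X-1)\,\Phi_{p}(X)$, where $\Phi_{p}(X)=\frac{X^{p}-1}{X-1}$. Here $\gcd(X-1,\Phi_{p}(X))\in\{1,p\}$, the value being $p$ exactly when $p\mid X-1$, in which case also $v_{p}(\Phi_{p}(X))=1$. Every prime $\ell\ne p$ dividing $\Phi_{p}(X)$ forces $X$ to have multiplicative order $p$ modulo $\ell$, hence $\ell\equiv 1\bmod p$, and by \rf{nosplit} no such $\ell$ divides $B$. Comparing $\ell$-adic valuations in $(X-1)\Phi_{p}(X)=B\,Z^{p}$, and using that $X-1$ and $\Phi_{p}(X)$ share no prime besides $p$, one finds that $v_{\ell}(\Phi_{p}(X))$ is divisible by $p$ for every $\ell\ne p$, while the $p$-part of $\Phi_{p}(X)$ equals $1$ or $p$. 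Hence $\Phi_{p}(X)=p^{e}Y^{p}$ with $e\in\{0,1\}$, and $e=1$ only if $p\mid X-1$: exactly the diagonal Nagell--Ljunggren equation.

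At this point I would invoke the diagonal Nagell--Ljunggren conjecture, namely that for $n\ge 3$ the only solution of $\frac{X^{n}-1}{X-1}=n^{e}Y^{n}$ with $e\in\{0,1\}$ is $(X,Y;n,e)=(18,7;3,0)$. Granting it, $p=3$, $X=18$, $Y=7$, $e=0$, so $\Phi_{3}(18)=7^{3}$ and $18^{3}-1=17\cdot 7^{3}=B\,Z^{3}$; the only cube divisors of $17\cdot 7^{3}$ are $1$ and $7^{3}$, and $Z=1$ is trivial, so $Z=7$ and $B=17$. Returning to the first step, $U=X^{m}=18$ forces $m=1$, because $18=2\cdot 3^{2}$ is not a perfect $m$-th power for any $m>1$; thus $n=p=3$ and the only non-trivial solution is $(X,Z;B,n)=(18,7;17,3)$.

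The principal obstacle is the last step, for the full diagonal Nagell--Ljunggren conjecture is open: what the argument actually delivers is a \emph{conditional} proof, and, unconditionally, a theorem for every $n\ge 3$ all of whose prime factors are at most $163\cdot 10^{6}$, since for prime exponents the diagonal equation is known under the hypothesis CF --- Vandiver together with the bound $i_{r}(p)<\sqrt{p}-1$ --- by recent work on the Nagell--Ljunggren equation and the computations of Buhler and Harvey \cite{BH}. Accordingly the hard input is not the elementary descent and factorisation above but the cyclotomic arithmetic of $\Q[\zeta_{p}]$ on which those results rest: control of the relative class number, of the $p$-rank measured by the index of irregularity, and of the cyclotomic units. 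Removing CF, to get an unconditional proof, appears at present out of reach.
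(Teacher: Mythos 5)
Your reduction is essentially the one the paper itself performs, and it is carried out correctly: the factorisation $X^p-1=(X-1)\Phi_p(X)$ with $\gcd\left(X-1,\Phi_p(X)\right)\in\{1,p\}$, the observation that every prime $\ell\neq p$ dividing $\Phi_p(X)$ satisfies $\ell\equiv 1\bmod p$ and hence, by \rf{nosplit}, cannot divide $B$, and the resulting identity $\Phi_p(X)=p^eY^p$ are exactly the content of Section 2.1; the descent from composite $n$ to a prime divisor is the content of Section 4 and Theorem \ref{genexpo}. But the statement you were asked to prove is labelled a \emph{conjecture} in the paper, and your final step --- quoting the full diagonal Nagell--Ljunggren conjecture --- replaces one open problem by another. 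So what you have is a conditional reduction, not a proof; this is precisely the gap the authors acknowledge in Remark \ref{remark}, where they point out that the inhomogeneity of \rf{dn} in $X$ blocks the lower bounds one would need to close the argument. Note also that the paper extracts more unconditional information from the reduction than your closing paragraph records: under \rf{nosplit} no divisor $m>1$ of $n$ can lie in $\id{N}(B)$, so Theorem \ref{genexpo} forces $n$ to be \emph{prime}, and Theorem \ref{tbin} then gives $n>163\cdot 10^6$, $X-1=\pm B/n^e$, $B<n^n$, and the Wieferich--Furtw\"angler type criteria; the correct unconditional statement is therefore stronger than ``all prime factors of $n$ at most $163\cdot 10^6$''.

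Two details of your descent deserve comment. Your remark that $n=2$ is genuinely exceptional is a good catch: $(X,Z;B,n)=(3,2;2,2)$ and $(17,3;32,2)$ satisfy \rf{nosplit} and \rf{bin} nontrivially, so Conjecture \ref{bt} must be read with $n\geq 3$ (equivalently, $n$ having an odd prime factor), as the paper implicitly does throughout. On the other hand, your disposal of $n=2^k$, $k\geq 2$, is too quick: the coprimality $\gcd(V^2-1,V^2+1)\mid 2$ only splits $V^4-1=2^jW^2$ into the pair $V^2+1=2a^2$, $V^2-1=2^{j-1}b^2$, and the first of these is a Pell equation with infinitely many solutions ($V=7,41,239,\dots$, e.g. $239^2+1=2\cdot 13^4$); one still needs the classical descent on $x^4-y^2=1$ and $x^4-2y^2=1$ to rule out simultaneous solutions. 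This is standard but is an argument, not an observation.
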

Current results on \rf{bin} are restricted to values of $B$ which are
built up from small primes $p \leq 13$ \cite{G}. If expecting that the equation
has no solutions, -- possibly with the exception of some isolated
examples -- it is natural to consider the case when the exponent $n$
is a prime. Of course, the existence of solutions $(X, Z)$ for
composite $n$ imply the existence of some solutions with $n$ prime, by
raising $X, Z$ to a power. 

The main contribution of this paper will be to relate \rf{bin} 
in the case when $n$ is a prime and \rf{nosplit} holds, to the
diagonal Nagell -- Ljunggren equation, 
\begin{eqnarray}
\label{dn}
\frac{X^n-1}{X-1} = n^e Y^n, \quad e = \begin{cases} 0 & \hbox{ if } X
\not \equiv 1 \bmod n, \\
1 & \hbox{ otherwise.} \end{cases}
\end{eqnarray}
This way, we can apply results from \cite{Mi1} and prove the following:

\begin{theorem}
\label{tbin}
Let $n$ be a prime and $B > 1$ an integer with $(\varphi^{*}(B), n ) =
1$. Suppose that the equation \rf{bin} has a non trivial integer
solution different from $n = 3$ and $(X, Z; B) = (18, 7; 17)$. Let $X
\equiv u \bmod n , 0 \leq u < n$ and $e = 1$ if $u = 1$ and $e = 0$
otherwise. Then:

\begin{enumerate}[A]
\item $n > 163 \cdot 10^6$. \label{A}
\item $X - 1 = \pm B/n^e$ and $B < n^n$. \label{B}
\item If $u \not \in \{ -1, 0, 1\}$, then condition CF \rf{ii} fails
  for $n$ and \label{C}
\begin{eqnarray*}
\begin{array}{r c r r l}  
2^{n-1} & \equiv & 3^{n-1} \equiv 1 & \bmod \ n^2, & \hbox{ and } \\
r^{n-1} & \equiv & 1 & \bmod \ n^2  & \hbox{ for all $ r | X(X^2-1)$}. 
\end{array}
\end{eqnarray*}

If $u \in \{ -1, 0, 1\}$, then Condition CF \rf{i} fails for $n$.
\end{enumerate}
\end{theorem}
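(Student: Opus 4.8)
\emph{Plan.}
The idea is to use that $n$ is prime, so that $\frac{X^n-1}{X-1}$ is the cyclotomic polynomial $\Phi_n(X)$, together with the splitting hypothesis \rf{nosplit}, to force $\Phi_n(X)$ (divided by the unavoidable factor $n^{e}$) to be a perfect $n$-th power. This exhibits a non-trivial solution of the diagonal Nagell--Ljunggren equation \rf{dn}, and then \ref{A}--\ref{C} become essentially a translation of the known facts about \rf{dn} from \cite{Mi1}.

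I would first record the elementary arithmetic of $\Phi_n(X)=\frac{X^n-1}{X-1}$ for $n$ prime: every prime divisor $q$ of $\Phi_n(X)$ satisfies $q\equiv 1\bmod n$ unless $q=n$, and $n\mid\Phi_n(X)$ happens exactly when $X\equiv 1\bmod n$, in which case $v_n(\Phi_n(X))=1$ (lifting the exponent); hence $\gcd(X-1,\Phi_n(X))=n^{e}$ with $e$ as in the statement. Discarding the finitely many $|X|\le 1$ (which give only $Z\in\{-1,0,1\}$), write $\Phi_n(X)=n^{e}\Psi$. Every prime $q\mid\Psi$ is then $\equiv 1\bmod n$, hence $q\nmid X-1$ and, by \rf{nosplit}, $q\nmid B$; comparing $q$-adic valuations in $X^n-1=(X-1)\Phi_n(X)=BZ^n$ gives $v_q(\Psi)=n\,v_q(Z)$ for every such $q$, so $\Psi=Y^n$ for an integer $Y$ and
\begin{equation*}
  \frac{X^n-1}{X-1}=n^{e}Y^{n},
\end{equation*}
which is exactly \rf{dn}. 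Moreover $|Y|\ge 2$, since $|Y|\le 1$ would force $\Phi_n(X)\le n$ and, for $|X|\ge 2$, only trivial solutions of \rf{bin}; for $X<0$ (recall $n$ is odd) this is just the ``$+$'' instance $\frac{(-X)^{n}+1}{-X+1}=n^{e}Y^{n}$ of \rf{dn}.

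For \ref{B}: from $BZ^n=X^n-1=n^{e}(X-1)Y^{n}$ and the fact that every prime divisor of $Y$ divides $\Psi$ — hence is $\equiv 1\bmod n$, so distinct from $n$ and, by \rf{nosplit}, not dividing $B$ — one gets $\gcd(Y,B)=1$, so $Y^{n}\mid Z^{n}$; writing $Z=YZ'$ yields $B\,Z'^{\,n}=n^{e}(X-1)$. Any prime $p\mid Z'$ would give $v_p(X-1)\ge n$ if $p\ne n$ and $v_n(X-1)\ge n-1$ if $p=n$, i.e.\ a non-trivial $n$-th power (up to a factor $n$) dividing $X-1$; but the analysis of \rf{dn} in \cite{Mi1} rules this out for a non-trivial solution, so $Z'=\pm 1$ and $X-1=\pm B/n^{e}$. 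The bound $B=n^{e}|X-1|<n^{n}$ is then the size estimate of \cite{Mi1} for \rf{dn}, the solution $n=3$, $X=18$, $Y=7$ (for which $B=17=X-1$) being the one excluded by hypothesis.

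Finally, \ref{A} and \ref{C} are read off from the structural theorems of \cite{Mi1}: a non-trivial solution of \rf{dn} can exist only if condition CF fails for $n$, and since CF holds for all primes up to $163\cdot 10^{6}$ by \cite{BH}, this gives \ref{A}; the precise obstruction is the dichotomy in \ref{C}, namely that CF \rf{ii} fails with $2^{n-1}\equiv 3^{n-1}\equiv 1\bmod n^{2}$ and $r^{n-1}\equiv 1\bmod n^{2}$ for all $r\mid X(X^{2}-1)$ when $u\notin\{-1,0,1\}$ (equivalently $n\nmid X(X^{2}-1)$), and that CF \rf{i} — Vandiver's conjecture — fails when $u\in\{-1,0,1\}$. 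The only genuinely delicate point is the passage \ref{B}: controlling the auxiliary factor $Z'$, i.e.\ excluding an $n$-th power divisor of $X-1$, is not elementary and rests on the fine arithmetic of Nagell--Ljunggren solutions in \cite{Mi1}; everything else is either elementary manipulation or a direct appeal to \cite{Mi1} and \cite{BH}, with minor care needed for signs and for small values of $X$.
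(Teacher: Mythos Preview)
Your reduction to the diagonal Nagell--Ljunggren equation and your derivation of \ref{A} and \ref{C} from \cite{Mi1} and \cite{BH} match the paper exactly. The gap is in \ref{B}. You correctly reach $n^{e}(X-1)=B\,Z'^{\,n}$ (the paper writes $Z'=C$), but you then assert that ``the analysis of \rf{dn} in \cite{Mi1} rules out'' a prime $p\mid Z'$. It does not. What \cite{Mi1} provides is only the size bound of Theorem~\ref{ubounds}, namely $|X|<E$ with $E$ of order at most $4(n-2)^{n}$; comparing with $|X-1|=|B|\,|C|^{n}/n^{e}$ yields merely $|C|<2n-1$. Any prime $p<2n-1$ is still a candidate divisor of $C$, and nothing in \cite{Mi1} excludes, say, $p=2,3,5$.

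Excluding these small primes is precisely the new content of the present paper and occupies all of Section~3. The argument is not a refinement of the upper bound but a genuinely different $p$-adic construction: one picks a prime $p\mid C$, builds an auxiliary group-ring element $\mu\in\Z[d(p)]$ with $\phi^{(1)}(\mu)=0$ and $\phi^{(-1)}(\mu)\neq 0$ (using that $|C|<2n-1$ forces $|d(p)|\geq 3$), multiplies it by a suitable Fueter combination $\theta_0\in I_f^{+}$, and uses the resulting $\Theta$ to control the $p$-adic expansion of the Jacobi integer $\beta[\Theta]$. A linear system in the coefficients of these expansions, shown to be regular via a Vandermonde argument modulo $\lambda$, produces a nonzero $\delta\in\Z[\zeta]$ whose norm is simultaneously large (divisible by a high power of $p$) and small (Hadamard bounds on the determinants), forcing $p\leq 5$; the residual cases $p\in\{2,3,5\}$ are then handled by a sharper choice of $\mu$. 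None of this is in \cite{Mi1}, and your proposal, by deferring the whole step to that reference, leaves the heart of \ref{B} unproved.
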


The particular solution $n = 3$ and $(X, Z; B) = (18, 7; 17)$ is
reminiscent of a solution of the diagonal Nagell solution; it is
commonly accepted that the existence of non trivial solutions tends to
render Diophantine equations more difficult to solve. Based on Theorem
\ref{tbin} we prove nevertheless the following

\begin{theorem}
\label{genexpo}
If equation \rf{bin} has a solution for a fixed $B$ verifying the
conditions \rf{nosplit}, then either $n \in \id{N}(B)$ or there is a
prime $p$ coprime to $\varphi^*(B)$ and a $m \in \id{N}(B)$ such that
$n = p \cdot m$. Moreover $X^m, Y^m$ are a solution of \rf{bin} for
the prime exponent $p$ and thus verify the conditions in Theorem
\ref{tbin}.
\end{theorem}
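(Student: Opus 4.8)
The plan is to reduce the general exponent $n$ to the prime-exponent case handled by Theorem \ref{tbin} by peeling off from $n$ its ``$\varphi^*(B)$-part''. Write $n = m \cdot k$ where $m$ is the largest divisor of $n$ lying in $\id{N}(B)$, i.e.\ every prime dividing $m$ also divides $\varphi^*(B)$, and $k$ is coprime to $\varphi^*(B)$. First I would observe that if $k = 1$ then $n = m \in \id{N}(B)$ and we are in the first alternative, so assume $k > 1$ and let $p$ be any prime divisor of $k$. Since $p \mid k$ and $k$ is coprime to $\varphi^*(B)$, we have $(p, \varphi^*(B)) = 1$. Set $m' = n/p$; note $m'$ is divisible by $m$ but may also carry other prime factors of $k$. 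The key elementary point is that from a nontrivial solution $(X, Z)$ of \rf{bin} for exponent $n$ we obtain $(X^{m'})^p - 1 = B (Z^{m'})^p$, so $(X^{m'}, Z^{m'})$ is a solution of \rf{bin} for the prime exponent $p$; it is nontrivial because $Z \notin \{-1,0,1\}$ forces $Z^{m'} \notin \{-1,0,1\}$, and $X^{m'} \neq 1$ (else $X$ a root of unity, contradiction), nor is $X^{m'} = \pm 1$ with $p$ odd giving a trivial right-hand side. Hence Theorem \ref{tbin} applies to $(X^{m'}, Z^{m'}; B, p)$, giving all the stated conclusions (barring the excluded Nagell exception, which cannot occur here since $p$ is coprime to $\varphi^*(B) = \varphi^*(17) = 16$ would force $p \neq 2$, fine, but $B$ is fixed satisfying \rf{nosplit} — one must check $B = 17$ is compatible, and indeed $\varphi^*(17) = 16$ is coprime to $3$, so this single case must be addressed or excluded by hypothesis).

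The remaining task is to show $m := n/p \in \id{N}(B)$, i.e.\ that $p$ was the \emph{only} prime factor of $n$ outside the support of $\varphi^*(B)$, so that $n = p \cdot m$ with $m \in \id{N}(B)$ exactly as claimed. This is where I expect the main obstacle. The naive reduction above only shows \emph{some} prime $p \mid k$ can be split off; to get $n = p \cdot m$ with $m \in \id{N}(B)$ one must rule out $k$ having two distinct prime factors, or a prime factor to a power $\geq 2$. The mechanism for this must come from conclusion \ref{B} of Theorem \ref{tbin}: applied to the prime exponent $p$ and the solution $(X^{m}, Z^{m})$ it yields $X^{m} - 1 = \pm B/p^{e}$ and $B < p^{p}$. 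But the bound $B < p^p$ is a \emph{fixed} constraint on $B$, and once $B$ is fixed there are only finitely many primes $p$ with $B < p^p$ — in fact, combined with part \ref{A} ($p > 163 \cdot 10^6$), it forces $p$ into a narrow range. More decisively, $X^m - 1 = \pm B/p^e$ pins down $X^m$ in terms of $B$ and $p$; if $n$ had two prime factors $p, q$ both coprime to $\varphi^*(B)$, applying Theorem \ref{tbin} to exponent $p$ gives $X^{n/p} - 1 = \pm B/p^{e_p}$ and to exponent $q$ gives $X^{n/q} - 1 = \pm B/q^{e_q}$; since $X^{n/p}$ and $X^{n/q}$ are distinct powers of the same $X > 1$, comparing these two pinned-down values forces a contradiction (they would have to satisfy an impossible size relation, as $X^{n/p} \neq X^{n/q}$ while both are $O(B)$ but one is a proper power of the other). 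Similarly $p^2 \mid n$ is excluded because then $X^{n/p}$ is itself a $p$-th power $> 1$, and $X^{n/p} - 1 = \pm B/p^e$ with the left side $\equiv -1 \bmod$ (a large power) clashes with $B$ fixed.

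I would carry out the steps in this order: (1) factor $n = m k$ with $m \in \id{N}(B)$ maximal; (2) if $k=1$ conclude; (3) else pick $p \mid k$, form $(X^{n/p}, Z^{n/p})$, verify nontriviality, invoke Theorem \ref{tbin} to land the conclusions about exponent $p$; (4) use part \ref{B} (the equation $X^{n/p} - 1 = \pm B/p^e$ together with $B < p^p$) to show no second prime factor coprime to $\varphi^*(B)$ can occur and no repeated such prime, hence $k = p$ and $n = p \cdot m$; (5) the assertion that $X^m, Z^m$ solve \rf{bin} for exponent $p$ and satisfy Theorem \ref{tbin} is then exactly what step (3) established. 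The delicate point throughout is handling the excluded Nagell--Ljunggren solution $(18,7;17,3)$: one must confirm that for the fixed $B$ under consideration this cannot masquerade as the split-off prime-exponent solution, which follows since that solution has $n=3$ and $\varphi^*(17)=16$, and $3 \in \id{N}(17)$ is false, so if $B=17$ the exponent $n$ would have to have $3$ among its $\varphi^*$-coprime part, and the theorem's statement simply routes that through the prime $p = 3$ — consistent with the phrasing ``thus verify the conditions in Theorem \ref{tbin}'', which includes the exception.
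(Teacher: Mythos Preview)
Your overall strategy---factor $n = m k$ with $m \in \id{N}(B)$ maximal, then use Theorem~\ref{tbin}.\ref{B} applied to each prime of $k$ to force $k$ to be a single prime---is exactly the paper's approach. The paper proceeds through the same case analysis: for two distinct primes $p,q$ coprime to $\varphi^*(B)$ it obtains $X^{n/p}-1 = \pm B/p^{e_p}$ and $X^{n/q}-1=\pm B/q^{e_q}$ and makes your ``impossible size relation'' concrete as $X^{n/p} \pm X^{n/q} \in \{0,\pm 2\}$ (together with easy variants $p(X^q-1)=q(X^p-1)$ etc.\ when one of $p,q$ divides $B$), which is readily excluded.

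The one place where your sketch falls short is the repeated-prime case $p^2 \mid n$. Your sentence ``$X^{n/p}-1 = \pm B/p^e$ with the left side $\equiv -1 \bmod$ (a large power) clashes with $B$ fixed'' does not supply an argument. The paper splits this into two subcases. For $p^c \mid n$ with $c\geq 3$ one has $X^{n/p} \geq 2^{p^{c-1}}$, and combining this lower bound with $B < p^p$ from Theorem~\ref{tbin}.\ref{B} gives an immediate contradiction (since $p > 163\cdot 10^6$). But for $c=2$ these bounds only yield $|X| < p$, and a genuinely separate argument is needed: the paper passes to $\Q(\zeta_{p^2})$, observes that any rational prime $\ell \mid Y$ must split completely there, hence $\ell \equiv 1 \bmod p^2$, so $Y \geq \ell > 2p^2$, contradicting $Y < |X| < p$. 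You should be aware that the $c=2$ case requires this extra idea; the size comparison alone does not close it.
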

This is a strong improvement of the currently known results.

\begin{remark}
\label{remark}
  Theorem \ref{tbin} uses criteria from the diagonal case of the
  Nagell-Ljunggren equation, the relation being established by point
  \rf{B} of the theorem. The criteria were proved in \cite{Mi1} and
  are in part reminiscent from classical cyclotomic results on
  Fermat's Last Theorem. Thus, the criteria for the First Case, which
  are ennounced in point \rf{C} are the Eichler criterion CF \rf{ii}
  and the criteria of Wieferich and Furtw\"angler (cf. Theorem 2 in \cite{Mi1}). For the Second Case
  of Diagonal Nagell-Ljunggren, in point \rf{C}, it was possible to
  restrict the two conditions proved by Kummer for the FLTII to the
  single condition CF \rf{i}, namely Vandiver's conjecture (cf. Theorem 4 in \cite{Mi1}). This is a
  consequence of the fact that unlike FLT, Nagell-Ljunggren is a
  binary equation, a fact which allowed also to prove upper bounds for
  the solutions, which are given in Theorem \ref{ubounds} below. The
  fact that the Nagell-Ljunggren equation is not homogenous in $X$
  makes it difficult to prove lower bounds, thus leaving a gap on the
  way to a complete proof of Conjecture \ref{bt}.

  The proofs in \cite{Mi1} used methods that generalize the ones that
  helped proving the Catalan Conjecture \cite{Mi2}. A variant of these
  methods will be applied for proving point \rf{B}. We
  gathered the occasion of writing this paper to give in the 
Appendix an extensive exposition of the computations on which the singular case
in the proof the respective estimate from \cite{Mi1} relies: some colleagues had
pointed out that they could not verify the computation on base of the arguments
in \cite{Mi1}, so this difficulty should be dealt with in the Appendix.
\end{remark}

The plan of the paper is as follows: in Section 2 we establish the
connection between equations \rf{bin} and \rf{tbin}, review some basic
properties of Stickelberger ideals and prove auxiliary technical
lemmata concerning coefficients of binomial series development.

With these prerequisites, we complete the proof of Theorem \ref{tbin}
in Section 3. Given the reduction to the Nagell-Ljunggren Diagonal
Case, the proof focuses on point \rf{B} of Theorem \ref{tbin}. In Section
4 we drop the condition that $n$ be a prime and use the proven facts
in order to deduce the results on \rf{bin} for arbitrary exponents $n$
which are stated in Theorem \ref{genexpo}. Finally, the Appendix
provides the details for an estimate used in \cite{Mi1}, as mentioned
in Remark \ref{remark}.

\section{Preliminary results}

The proof of Theorem \ref{tbin} emerges by relating the equation \rf{bin}
to the Diagonal Case of the Nagell -- Ljunggren conjecture. In this section
we shall recall this conjecture and several technical tools used for 
reducing one conjecture to the other. The reduction is performed in the
next section.

\subsection{Link of \rf{bin} with the diagonal Nagell -- Ljunggren
  equation}
We note that $\delta = \left(\frac{X^n-1}{X-1}, X-1\right)$ divides $ n$
and $\delta = n$ exactly when $X \equiv 1 \bmod n$. Indeed,
from the expansion
\[ \frac{X^n-1}{X-1} = \frac{((X-1)+1)^n - 1}{X - 1} = n + k (X-1), \]
with $k \in \Z$, one deduces the claim $\delta \big \vert n$. If $ u
\neq 1$, then $\delta=n$ and thus $n | (X - 1)$ must hold. Conversely,
inserting $X \equiv 1 \bmod n$ in the previous expression shows that
in this case $\delta = n$.

We first show that any solution of \rf{bin} leads to a solution of
\rf{dn}. For this, let $\prod_{i=1}^k
p_i$ be the radical of $\frac{X^n-1}{n^e(X-1)}$. Obviously, $\rad(\frac{X^n-1}{n^e(X-1)}) \ | \ \rad(X^n -
1)$. Let $\zeta \in \C$ be a primitive \nth{n} root of unity. Then the
numbers $\alpha_c = \frac{X - \zeta^c}{(1-\zeta^c)^e} \in \Z[\zeta]$
by definition of $e$, and $(\alpha_c, n) = 1$. Since for distinct $c, d
\not \equiv 0 \bmod n$ we have $( 1-\zeta^d)^e \cdot \alpha_d - (
1-\zeta^c)^e \cdot \alpha_c = \zeta^c - \zeta^d$, it follows that
$(\alpha_c, \alpha_d) \ | \ (1-\zeta)$ and in view of $(\alpha_c, n) =
1$, it follows that the $\alpha_c$ are coprime.

Let $F = \prod_{c=1}^{n-1} \alpha_c = \frac{X^n-1}{n^e(X-1)}$ and $q \ | \ F$ be a rational prime.  In the
field $\Q[ \zeta ]$, it splits completely in the prime ideals $\eu{Q}_c
= (q, \alpha_c), \ c = 1, 2, \ldots, n-1$: these ideals are coprime,
as a consequence of the coprimality of the $\alpha_c$.  Therefore $q
\equiv 1 \bmod n$ and it follows from \rf{nosplit} that $(q, B) = 1$,
so $q | Z$.  Furthermore, \rf{bin} implies that there exists $j_q >0$
such that $q^{j_q n} || Z^n$ and thus $q^{j_q n} || F$. This holds
for all primes $q \ | \ \rad(F)$. It follows that \rf{dn} is verified
for $Y = \prod_{q | F} q^{j_q}$ and $Y \ | \ Z$. We have thus proved
that if $(X, Z)$ is a solution of \rf{bin} for the prime $n$, then
there exists $C \in \Z$ such that $Z = C \cdot Y$ with $Y$ as above,
and:

\begin{eqnarray}
\label{x}
\frac{X^n-1}{n^e (X-1)} & = & Y^n \quad \hbox{ and } \quad \\
\label{y} X-1 & = & B \cdot C^n / n^e.
\end{eqnarray}
We shall write from now on $D = X-1$.

From the above, we conclude that any integer solution of \rf{bin}
induces one of \rf{dn}.  Conversely, if $(X, Y)$ is a solution of
\rf{dn}, then $\left(X, Y; n^e(X-1)\right)$ is a solution of
\rf{bin}. For instance, the particular solution $(X, Y; B) = (18, 7;
17)$ of \rf{bin} stems from
\begin{eqnarray*}
\label{part}
\frac{18^3-1}{18-1} = 7^3,
\end{eqnarray*}
which is supposed to be the only non trivial solution of \rf{dn}. 

\begin{remark}
\label{minus}
Note that if $(X, Z)$ verify \rf{bin}, then $(-X, Z)$ is a solution of
$\frac{X^n + 1}{X + 1} = B Z^n$, so the results apply also to the
equation:
\begin{eqnarray*}
\label{tbins}
X^n + 1 = B Z^n.
\end{eqnarray*}
\end{remark}

\subsection{Bounds to the solutions of Equation \rf{dn}}
We shall use the following Theorem from \cite{Mi1}:
\begin{theorem}
\label{ubounds}
Suppose that $X, Y$ are integers verifying \rf{dn} with $n \geq 17$
being a prime. Let $u = (X \bmod n)$. Then there is an $E \in \R_+$
such that $| X | < E$. The values of $E$ in the various cases of the
equation are the following:
\begin{eqnarray}
\label{Bvals}
E = \begin{cases} \ 4 \cdot \left(\frac{n-3}{2}\right)^{\frac{n+2}{2}}
& \hbox{if $u \not \in \{-1, 0, 1\}$} \\ \ (4 n)^{\frac{n-1}{2}} &
\hbox{if $ u = 0$ }, \\ \ 4 \cdot
\left(n-2\right)^{n} & \hbox{otherwise}.
\end{cases}
\end{eqnarray}
\end{theorem}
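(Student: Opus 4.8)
Since Theorem \ref{ubounds} is quoted verbatim from \cite{Mi1}, I shall only outline the architecture of a proof; the genuinely delicate point is deferred to the Appendix. The plan is to pass to the cyclotomic field $\K = \Q[\zeta]$, $\zeta$ a primitive \nth{n} root of unity, with Galois group $G = \Gal(\K/\Q) = \{\sigma_a : \zeta \mapsto \zeta^a\}$ and $\jmath = \sigma_{-1}$ complex conjugation, and to put $\beta = \frac{X-\zeta}{(1-\zeta)^e} \in \Z[\zeta]$. Exactly as in the computation leading to \rf{x}, the conjugates $\beta^{\sigma_c} = \frac{X-\zeta^c}{(1-\zeta^c)^e}$ are pairwise coprime and coprime to $n$, and their product is $n^{-e}\frac{X^n-1}{X-1} = Y^n$; hence, by unique factorisation of ideals, $(\beta) = \id{A}^n$ for an integral ideal $\id{A} \subset \Z[\zeta]$ with $\Norm(\id{A}) = |Y|$.

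Since no hypothesis on the class numbers $h_n^{\pm}$ is available, the annihilation of the ideal class of $\id{A}$ must be produced by Stickelberger's theorem — the Stickelberger ideal $\id{I} \subset \Z[G]$ annihilates $\mathrm{Cl}(\K)$ unconditionally — rather than by Kummer's lemma. I would take a short element of $\id{I}$ in the minus part, of the smallest available weight and with non-negative coefficients: for $n$ prime a Fueter element such as $\theta_2 = \sum_{a=1}^{n-1}\lfloor\tfrac{2a}{n}\rfloor\sigma_a^{-1}$, of weight $w = \tfrac{n-1}{2}$, together with the variants fitting the three cases. Then $\id{A}^{\theta}$ is principal, $\id{A}^{\theta} = (\gamma)$ with $\gamma$ an algebraic integer, and $\beta^{\theta} = \varepsilon\gamma^n$ for a unit $\varepsilon$; because $\theta$ is a minus element, $\beta^{\theta}\,\overline{\beta^{\theta}} = \Norm_{\K/\Q}(\beta)^{s} \in \Q$ for a small integer $s$, so $\varepsilon\overline{\varepsilon}$ is a root of unity which, after replacing $\gamma$ by $\zeta^m\gamma$, may be absorbed: $\beta^{\theta} = \zeta^k\gamma^n$. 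The same minus relation shows that all conjugates $\sigma_j(\gamma)$ have the same absolute value $\asymp |X|^{w/n}$, and that $|\Norm_{\K/\Q}(\gamma)| = |Y|^{w}$.

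With $\gamma$ in hand I would run the obstruction--vanishing mechanism of \cite{Mi2}, in the form adapted to the binary equation \rf{dn}. For $e = 0$ the identity $\gamma^n = \zeta^{-k}\prod_{c}(X-\zeta^c)^{a_c}$ displays $\gamma^n$ as an explicit polynomial in $X$ of degree $w$, so $\gamma$ is pinned down by $X$ up to a controlled error; combining this with one more relation — a weight-adjusting twist of $\theta$, or a Galois conjugate, arranged so that the total weight is a multiple of $n$ — and eliminating $X$ produces an algebraic integer $\delta \in \Z[\zeta]$, supported at $n$, whose house is small against $|\Norm_{\K/\Q}(\delta)|$. If $|X| \ge E$ the resulting estimate forces $|\Norm_{\K/\Q}(\delta)| < 1$, hence $\delta = 0$; but $\delta = 0$ is a non-trivial polynomial relation among the $X-\zeta^c$, which is impossible. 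Therefore $|X| < E$. The three values of $E$ simply record that the shape of $\beta$ — namely $e = 0$ with $u \notin \{-1,0,1\}$; the special value $u = 0$, where the prime $n$ itself enters $\beta$; and $u \in \{\pm 1\}$, in particular $e = 1$, where $\beta$ is $(1-\zeta)$-adically close to $1$ and the least room is available — permits auxiliary elements of weight of order $\tfrac n2$, $\tfrac n2$ and $n$ respectively; the hypothesis $n \ge 17$ is only needed to keep the error terms negligible against these leading powers.

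The hard part — and the reason for the Appendix of the present paper — is the \emph{singular sub-case} of the last step: when $\gamma$ lies so close to a root of unity that $\delta$ cannot be shown non-zero by soft estimates, one must instead bound the relevant binomial-series coefficients (the technical lemmata assembled in this section) by hand and push the house inequality through by an explicit computation. This is precisely the argument which, as recalled in Remark \ref{remark}, some colleagues could not reconstruct from \cite{Mi1}.
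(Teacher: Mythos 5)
The first thing to note is that the paper does not prove Theorem \ref{ubounds} at all: it is imported verbatim from \cite{Mi1} (``We shall use the following Theorem from \cite{Mi1}''), and the paper's only original contribution to that proof is the Appendix, which repairs one step of it. So your outline is really being measured against the proof in \cite{Mi1}, whose essential ingredients Section 2 and the Appendix reproduce. Against those, your architecture is directionally right: the passage to $\beta = (X-\zeta)/(1-\zeta)^e$ with pairwise coprime conjugates, the use of Stickelberger/Fueter elements $\theta$ to produce Jacobi integers $\beta[\theta]$ with $\beta[\theta]^n = (\zeta^{c_X}\alpha)^{\theta}$ and $\beta[\theta]\cdot\overline{\beta[\theta]} = Y^{\varsigma(\theta)}$, and the endgame of forming a linear combination $\delta$ whose norm is squeezed between an archimedean (Hadamard) upper bound and a divisibility lower bound, with both branches $\delta \neq 0$ and $\delta = 0$ yielding a bound on $|x|$ --- all of this is exactly the machinery recalled in the paper. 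One slip of substance: the identity $\beta^{\theta}\overline{\beta^{\theta}} = \Norm_{\K/\Q}(\beta)^{\varsigma(\theta)}$ comes from $\theta + \jmath\theta = \varsigma(\theta)\cdot\Norm$, so $\theta$ is emphatically \emph{not} a minus element; what you need is the relative-weight relation, not a minus-part one, and the subsequent claim that all conjugates of $\gamma$ have equal absolute value should be derived from it.

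The genuine gap --- beyond the fact that the decisive quantitative content (why the three cases yield precisely the exponents $\frac{n+2}{2}$, $\frac{n-1}{2}$ and $n$) is asserted rather than derived --- is your description of the singular sub-case. It is not that ``$\gamma$ lies so close to a root of unity that $\delta$ cannot be shown non-zero by soft estimates.'' The singularity is a linear-algebra degeneracy: the $\frac{p-1}{2}\times\frac{p-1}{2}$ coefficient matrix $\rg{A} = \left(\zeta^{-\kappa_{ac}/a}\right)$ built from the Galois exponents may fail to be invertible, so the system determining the coefficients $\lambda_{\sigma}$ of $\delta$ cannot be solved as posed. The repair (Lemma \ref{bouquet}) enlarges the row space of $\rg{A}$ by Hadamard products with a vector of pairwise distinct coordinates, which strictly increases the dimension and permits extraction of a regular $(r+1)\times(r+1)$ submatrix $\rg{S}$; only then do the Cramer and Hadamard estimates go through in both branches. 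Without this, your outline has no answer to the possibility that every admissible $\delta$ vanishes for trivial rank reasons --- which is precisely the point the Appendix exists to address.
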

By comparing the bounds \rf{Bvals} with \rf{y}, it follows that $| C |
< 2n-1$. In particular, the primes dividing $C$ do not split
completely in $\Q[\zeta_n]$ -- since a prime splitting in this field
has the form $r = 2 k n + 1 > 2n$.
\begin{remark}
\label{4}
Note that $| C | < 2 n-1$ implies a fortiori that for all primes $r |
C$, $r^2 \not \equiv 1 \bmod n$. If $d(r) \leq \Gal(\Q[ \zeta ]/\Q)$
is the decomposition group of the unramified prime $r$, it follows
that $| d(r) | \geq 3$; moreover, either $d(r)$ contains a subcycle
$d' \subset d(r)$ of odd order $| d' | \geq 3$ or it is a cyclic $2$-group 
with at least $4$ elements.
\end{remark}

\subsection{A combinatorial lemma}

\begin{lemma}
\label{sm}
Let $p$ be an odd prime, $k \in \N$ with $1 < k < \log_2(p)$ and $P =
\{ 1, 2, \ldots, p-1\}$. If $S = \{ a_1, a_2, \ldots, a_k \} \subset
P$ be a set of numbers coprime to $p$ and such that $a_i \not \equiv
\pm a_j \bmod p$ for $i \neq j$. We set the bound $A = 2 \lceil
p^{1/k} \rceil$; then there are $k$ numbers $b_i \in \Z, \ i = 1, 2,
\ldots, k$, not all zero, with $0 \leq | b_i | \leq A$ and such that
\[ \sum_{i=1}^k a_i b_i \equiv 0 \bmod p. \]
For $k = 2$, we can choose the $b_i$ such that the additional condition 
\[ \sum_{i=1}^2 b_i/a_i \not \equiv 0 \bmod p. \]
holds.
\end{lemma}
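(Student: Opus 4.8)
The plan is to prove the two assertions separately. The existence of the $b_i$ follows from a Thue-style pigeonhole count: set $N=\lceil p^{1/k}\rceil$ and consider the box $\{0,1,\ldots,N\}^k\subset\Z^k$, which has $(N+1)^k$ points. Since $N\ge p^{1/k}$ we have $N+1>p^{1/k}$, hence $(N+1)^k>p$, so the map $(c_1,\ldots,c_k)\mapsto\sum_{i=1}^k c_ia_i\bmod p$ cannot be injective on the box. Two distinct preimages $c,c'$ of a common residue yield $b_i:=c_i-c_i'$, which are integers, not all zero, with $|b_i|\le N\le A$ and $\sum_{i=1}^k a_ib_i\equiv0\bmod p$; in fact this gives the stronger bound $|b_i|\le\lceil p^{1/k}\rceil$, the factor $2$ in $A$ being mere slack. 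I would also record that $1<k<\log_2 p$ forces $p>2^k\ge4$, so $p\ge5$ and $N=\lceil p^{1/k}\rceil<p$; hence every nonzero $b_i$ obtained above automatically satisfies $b_i\not\equiv0\bmod p$.

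For $k=2$, starting from such $b_1,b_2$ I would first check that both are nonzero modulo $p$: if, say, $b_2\equiv0\bmod p$, then $a_1b_1\equiv0\bmod p$ and, $a_1$ being invertible mod $p$, also $b_1\equiv0\bmod p$, forcing $b_1=b_2=0$ since $|b_i|<p$ --- a contradiction. Thus $(b_1,b_2)\bmod p$ is a nonzero point of the line $L_1=\{(x,y):a_1x+a_2y\equiv0\bmod p\}$. The condition $x/a_1+y/a_2\equiv0\bmod p$, i.e. $a_2x+a_1y\equiv0\bmod p$ after clearing denominators, defines another line $L_2$ through the origin; $L_1$ and $L_2$ coincide iff their spanning vectors $(-a_2,a_1)$ and $(-a_1,a_2)$ are proportional, i.e. iff $a_1^2-a_2^2\equiv0\bmod p$, i.e. iff $a_1\equiv\pm a_2\bmod p$, which is excluded by hypothesis. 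Hence $L_1\cap L_2=\{(0,0)\}$, so the nonzero point $(b_1,b_2)\bmod p$ lies on $L_1$ but not on $L_2$; that is, $b_1/a_1+b_2/a_2\not\equiv0\bmod p$, as required.

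I do not expect a genuine obstacle here: the counting inequality $(N+1)^k>p$ is immediate, and the $k=2$ addendum reduces to the observation that the two relevant lines through the origin are distinct precisely because $a_1\not\equiv\pm a_2\bmod p$. The only point that needs a moment of care is verifying that the reductions $b_i\bmod p$ are genuinely nonzero, which is exactly where the mild size hypothesis $k<\log_2 p$ enters, since it guarantees $N=\lceil p^{1/k}\rceil<p$.
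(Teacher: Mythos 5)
Your proposal is correct and follows essentially the same route as the paper: a pigeonhole count on a box of more than $p$ lattice points for the existence of the $b_i$, and for $k=2$ the observation that the two homogeneous conditions form a system whose determinant is $\frac{a_1^2-a_2^2}{a_1a_2}\not\equiv 0\bmod p$ (equivalently, two distinct lines through the origin), so a nonzero solution of the first cannot satisfy the second. Your explicit check that $\lceil p^{1/k}\rceil<p$, so that the $b_i$ are genuinely nonzero modulo $p$, is a small point the paper leaves implicit.
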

\begin{proof}
  Let $T = \{ 1, 2, \ldots, A\} \subset P$.  Consider the functional
  $f : T^k \rightarrow \ZM{p}$ given by
\[ f(\vec{t}) \equiv \sum_{i=1}^k t_i a_i \ \ \bmod p, \quad \hbox{
  with} \quad \vec{t} = (t_1, t_2, \ldots, t_k) \in T^k.\] Since $|
T^k | > p$, by the pigeon hole principle there are two vectors
$\vec{t} \neq \vec{t'}$ such that $f(\vec{t}) \equiv f(\vec{t'}) \bmod
p$. Let $b_i = t_i - t'_i$; by construction, $0 \leq | b_i | \leq A$
and not all $b_i$ are zero, since $\vec{t} \neq \vec{t'}$. The choice
of these vectors implies $\sum_{i=1}^k a_i b_i \equiv 0 \bmod p$, as
claimed.

We now turn to the second claim. If the claim were false, then 
\[ a_1 b_1 + a_2 b_2 = 0 \hbox{ and } b_1/a_1 + b_2 / a_2 = 0, \]
a homogenous linear system $S$ with determinant 
$\det(S) = \frac{a_1^2-a_2^2}{a_1 a_2}$, which is non vanishing 
under the premise of the lemma. This would imply that 
the solution $b_1, b_2$ is trivial, in contradiction with our construction.
This completes the proof.
\end{proof}

\subsection{Some notation}
We assume that $n$ is prime and let $\zeta$ be a primitive \nth{n}
root of unity, $\K = \Q(\zeta)$ the \nth{n} cyclotomic field and $G =
\Gal(\K/\Q)$ the Galois group. The automorphisms $\sigma_a \in G$ are
given by $\zeta \mapsto \zeta^a, \ a = 1, 2, \ldots, n-1$; complex
conjugation is denoted by $\jmath \in \Z[ G ]$. In the ring of
integers $\Z [\zeta]$, one has finite \textit{$\lambda$-adic} expansions:
for any $\alpha \in \Z[ \zeta ]$ and some $N > 0$ there are 
$a_j  \in \{ -(p-1)/2, \cdots, 0, 1, \cdots, (p-3)/2 \}, j = 0, 1, \ldots N$ such that:
\begin{equation}
\label{lambdaadic}
\alpha = \sum_{j=0}^{N} a_j (1-\zeta)^{j}. 
\end{equation}

We shall use the algebraic $O( \cdot )$-notation, to suggest the
remainder of a power series. This occurs explicitly in the following
four contexts
\begin{itemize}
\item[ (i) ] In a $\lambda$-adic development of the type
  \rf{lambdaadic}, we write $\alpha = x+ O(\lambda^m)$ to mean that
  there is some $y \in \Z[ \zeta ]$ such that $\alpha - x = \lambda^m
  y$. Since $(n) = (\lambda^{p-1})$, powers of $n$ can occur as well
  as powers of $\lambda$ in this notation.
\item[ (ii) ] We also use formal power series, often written $f =
  f(D) \in \K[[ D ]]$.  For $f = \sum_{k=0}^{\infty} f_k D^k$ with
  partial sum $S_m(f) = \sum_{k=0}^{m} f_k D^k$ we may also use the
  $O( \cdot )$-notation and denote the remainder by $f(D) = S_m(D) +
  O( D^{m+1} )$.
\item[ (iii) ] Suppose that $D$ is an integer and the formal power
  series converges in the completion $\K_{\eu{P}}$ at some prime
  $\eu{P} \subset \id{O}(\K)$ dividing $D$. Suppose also that in this 
case all coefficients of $f$ be integral: then the remainder $f(D) -
  S_m(D)$ is by definition divisible by $\eu{P}^{m+1}$, so $O( D^{m+1}
  )$ means in this context that the remainder is divisible by
  $\eu{P}^{m+1}$.
\item[ (iv) ] If $f(D)$ converges at all the prime ideals dividing
  some integer $a | D $, then $O( D^{m+1} )$ will denote a number
  divisible by $a^{m+1}$. In this paper we shall use this fact in the
  context in which $a = p$ is an integer prime dividing $D$ and such
  that $f(D)$ converges at all prime ideals of $\K$ above $p$.
\end{itemize}
\subsection{Auxiliary facts on the Stickelberger module}
\label{SecStick}
The following results are deduced in \cite{Mi1}, Section 4, but see also
\cite{Mi2}, \S 2.1-2.3 and \S 4.1. The results shall only be
mentioned here without proof.

The Stickelberger module is $I = (\vartheta \cdot \Z[ G ])
\cap \Z[ G ]$, where $\vartheta = \frac{1}{n} \sum_{c=1}^{n-1} c \cdot
\sigma_c^{-1}$ is the Stickelberger element. For $\theta = \sum_c
n_c \sigma_c \in I$ we have the relation $\theta + \jmath \theta =
\varsigma(\theta) \cdot \Norm$, where $\varsigma(\theta) \in \Z$ is
called the \textit{relative weight} of $\theta$. The augmentation of
$\theta$ is then
\[ | \theta | = \sum_c n_c = \varsigma(\theta) \cdot \frac{n-1}{2}.\]

The Fueter elements are
\[ \psi_k = (1 + \sigma_{k} - \sigma_{k+1}) \cdot \vartheta =
\sum_{c=1}^{n-1} \left( \left[ \frac{(k+1)c}{n} \right] - \left[
    \frac{kc}{n} \right] \right) \cdot \sigma_c^{-1}, \quad 1 \leq k
\leq (n-1)/2.\] Together with the norm, they generate $I$ as a $\Z$ -
module (of rank $(n+1)/2$) and $\varsigma(\psi_k) = 1$ for all $k$.

The Fuchsian elements are \[ \Theta_k = (k - \sigma_k) \cdot \vartheta
= \sum_{c=1}^{n-1} \left[ \frac{kc}{n} \right] \cdot \sigma_c^{-1},
\quad 2 \leq k \leq n.\] They also generate $I$ as a $\Z$ -
module. Note that $\Theta_n$ is the norm, and that we have the
following relationship between the Fueter and the Fuchsian elements:

\begin{equation*}
\psi_1  =  \Theta_2 \text{   and   } 
\psi_k  =  \Theta_{k+1} - \Theta_k,\ k \geq 2
\end{equation*}

An element $\Theta = \sum_c n_c \sigma_c$ is \textit{positive}
if $n_c \geq 0$ for all $c \in \{ 1, 2, \ldots, p-1\}$. We write
$I^+ \subset I$ for the set of all positive elements. They 
form a multiplicative and an additive semigroup.
  
The Fermat quotient map $I \rightarrow \ZM{n}$, given by $$\varphi :
\theta = \sum_{c=1}^{n-1} n_c \sigma_c \mapsto \sum_{c=1}^{n-1}
c n_c \bmod n,$$ is a linear functional, with kernel $I_f=\{ \theta
\in I : \zeta^{\theta} = 1\}$ (the \textit{Fermat module}), and enjoys
the properties:
\begin{eqnarray*}
\label{fc}
\zeta^{\theta} & = & \zeta^{\varphi(\theta)}, \nonumber \\
(1+\zeta)^{\theta} & = & \zeta^{\varphi(\theta)/2}, \\
(1-\zeta)^{\theta} & = & \zeta^{\varphi(\theta)/2} \cdot \left(
\lchooses{-1}{n} n\right)^{\varsigma(\theta)/2}, \nonumber
\end{eqnarray*}
where $\lchooses{-1}{n} $ is the Legendre symbol.

The last relation holds up to a sign which depends on the embedding of
$\zeta$. For a fixed embedding, we let $\nu = \sqrt{\lchooses{-1}{n}
  n}$ be the generator of the quadratic subfield $\Q(\nu) \subset \Q(\zeta)$. 
A short computation shows that $(1-\zeta)^{\theta} = \zeta^{\varphi(\theta)/2}
\nu$.  Note that for $\theta \in I$ with $\varsigma(\theta) = 2$ we
have $(1-\zeta)^{2 \theta} = \zeta^{\varphi(\theta)} \cdot n^2$ for
any embedding.

We shall want to consider the action of elements of $\theta \in \F_n[G]$ on explicit algebraic
numbers $\beta \in \K$. Unless otherwise specified, an element $\theta = \sum_{c=1}^{n-1} m_c \sigma_c \in \F_n[G]$ is lifted to $\sum_{c=1}^{n-1} n_c \sigma_c$, where $n_c \in \Z$ are the unique integers with $0 \leq n_c < p$ and $n_c \equiv m_c \mod p$. In particular, lifts are always positive, of bounded weight $w(\theta) \leq (p - 1)^2$. Rather than introducing an additional notation for the lift defined herewith, we shall always assume, unless otherwise specified, that $\theta \in \F_p[G]$ acts upon $\beta \in \K$ via this lift.

Using this lift, we define the following additive maps: 
\begin{equation*}
\rho_0 :  \F_n [G]  \ra \Q(\zeta) \quad
  \theta = \sum_{c=1}^{n-1} n_c \sigma_c  \mapsto  \sum_{c \in P} \frac{n_c}{1- \zeta^c},
\end{equation*}
and
\begin{equation*}
\rho :  \F_n [G]  \ra \Z[\zeta] \quad
 \theta  \mapsto  (1-\zeta) \cdot \rho_0[\theta].
\end{equation*}
The \nth{i} moment of an element $\theta = \sum_{c=1}^{n-1} n_c
\sigma_c$ of $\Z[G]$ is defined as:
\begin{equation*}
\phi^{(i)} (\theta) = \sum_{c=1}^{n-1} n_c c^i \mod n.
\end{equation*}
Note that $\phi^{(1)}$ is the {\em Fermat quotient map}: $\phi^{(1)} =
\varphi$.  The moments are linear maps of $\F_p$-vector spaces and
homomorphism of algebras, verifying:
\begin{eqnarray}
\label{phiprop}
\begin{array}{l c l l c l}
  \qquad \phi^{(i)}(a \theta_1 + b \theta_2) & = & a \phi^{(i)}(\theta_1) + b \phi^{(i)}(\theta_2), & \hbox{and} & \\
 \qquad \phi^{(i)}(\theta_1 \theta_2) & = & \phi^{(i)}(\theta_1) \phi^{(i)}(\theta_2), & \hbox{with } & \theta_j \in \F_p[ G ]; a, b \in \F_p.
\end{array}
\end{eqnarray}
The linearity in the first identity is a straight-forward verification
from the definition.  For the second, note that for $\theta = \sum_c
n_c \sigma_c$ we have
\[ \phi^{(i)}(\sigma_a \theta) = \phi^{(i)}\left(\sum_c n_c \sigma_{a c}\right) = 
\sum n_c \cdot (a c)^i = a^i \cdot \phi^{(i)}(\theta). \] 
Using the already established linearity, one deduces the multiplicativity of  
$\phi^{(i)}$ as a ring homomorphism.

Let $\alpha = \frac{X-\zeta}{(1-\zeta)^e} \in \Z[ \zeta]$, as before,
and define $c_X \equiv 1/(X-1) \bmod n$ if $e = 0$ and $c_X = 0$ if $e
= 1$. For any $\theta \in I^+$, there is a \textit{Jacobi integer}
$\beta[\theta] \in \Z[ \zeta ]$ such that $\beta[\theta]^n =
(\zeta^{c_X} \alpha)^{\theta}$, normed by $\beta[\theta] \equiv 1
\bmod (1-\zeta)^2$ (Lemma 2 of \cite{Mi1}). The definition of $\varsigma(\theta)$
implies that
\begin{eqnarray}
\label{norm}
\beta[\theta] \cdot \overline{\beta[\theta]} =
\Norm_{\K/\Q}(\alpha)^{\varsigma(\theta)} = Y^{\varsigma(\theta)}.
\end{eqnarray}

We have for any $\theta \in I^+$,
\begin{eqnarray}
\label{beta1}
\beta[\theta]^n = (\zeta^{c_X} \alpha)^{\theta} = (\zeta^{c_X}
(1-\zeta)^{1-e})^{\theta} \cdot \left(1 + \frac{X-1}{1-\zeta}\right)^{\theta}
\end{eqnarray}

\begin{lemma}
\label{theta}
We remind that $D=X-1$. For any $\theta \in 2 \cdot I_f^+$, for any prime ideal $\eu{P} \ | \
D$, there is a $\kappa = \kappa_{\eu{P}}(\theta) \in \ZM{n}$ such that
\begin{equation*}
\label{base1} 
\beta[ \theta ] \equiv \zeta^{\kappa} \cdot Y^{\frac{\varsigma(\theta)}{2}} \bmod \eu{P}.
\end{equation*}
\end{lemma}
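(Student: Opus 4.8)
The plan is to evaluate $\beta[\theta]^n$ modulo $\eu{P}$ by means of the factorisation \rf{beta1}, recognise it there as the reduction of a rational integer, compare with $Y^n \bmod \eu{P}$, and then extract an $n$-th root inside the residue field $\F_{\eu{P}}$. Write $\theta = 2\eta$ with $\eta \in I_f^+$; then $\varsigma(\theta) = 2\varsigma(\eta)$, and since $\eta$ lies in the Fermat module, $\varphi(\theta) = 2\varphi(\eta) \equiv 0 \bmod n$. Feeding this into \rf{beta1} and using the relations recorded above for $\zeta^\theta$ and $(1-\zeta)^\theta$, the prefactor collapses to a rational integer: $(\zeta^{c_X})^\theta = \zeta^{c_X \varphi(\theta)} = 1$ and $(1-\zeta)^\theta = \zeta^{\varphi(\theta)/2}\bigl(\lchooses{-1}{n}n\bigr)^{\varsigma(\theta)/2} = \bigl(\lchooses{-1}{n}n\bigr)^{\varsigma(\eta)}$, so that
\[
\beta[\theta]^n \;=\; \Bigl(\lchooses{-1}{n}\,n\Bigr)^{(1-e)\varsigma(\eta)}\cdot\Bigl(1+\tfrac{D}{1-\zeta}\Bigr)^{\theta},
\]
the first factor being $1$ when $e = 1$.

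Next I reduce modulo $\eu{P}$. Writing $\theta = \sum_{c=1}^{n-1} n_c \sigma_c$ with the $n_c \ge 0$, we have $\bigl(1+\tfrac{D}{1-\zeta}\bigr)^{\theta} = \prod_{c}\bigl(1+\tfrac{D}{1-\zeta^c}\bigr)^{n_c}$. If $\eu{P} \nmid n$, each $1-\zeta^c$ is a unit at $\eu{P}$ while $\eu{P} \mid D$, so every factor is $\equiv 1 \bmod \eu{P}$; the remaining case $\eu{P} = (1-\zeta)$ forces $e = 1$ and $n \mid D$, and then $D/(1-\zeta^c) \equiv 0 \bmod \eu{P}$ just as well. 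Hence $\bigl(1+\tfrac{D}{1-\zeta}\bigr)^{\theta} \equiv 1 \bmod \eu{P}$, so $\beta[\theta]^n \equiv \bigl(\lchooses{-1}{n}n\bigr)^{(1-e)\varsigma(\eta)} \bmod \eu{P}$. On the other hand, combining $\frac{X^n-1}{X-1} = n + k(X-1)$ with \rf{x} gives $Y^n n^e = n + k(X-1)$, so reducing $X \equiv 1 \bmod \eu{P}$ yields $Y^n \equiv n \bmod \eu{P}$ if $e = 0$, and $Y^n \equiv 1 \bmod \eu{P}$ if $e = 1$ (at least at the primes $\eu{P} \nmid n$); in either case $\bigl(\lchooses{-1}{n}n\bigr)^{(1-e)\varsigma(\eta)} \equiv \lchooses{-1}{n}^{(1-e)\varsigma(\eta)}\cdot\bigl(Y^{\varsigma(\eta)}\bigr)^{n} \bmod \eu{P}$. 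Finally $\gcd(Y, D) = 1$: a common prime divisor would divide $\frac{X^n-1}{n^e(X-1)}$ and, by the same expansion, $n$, while being $\equiv 1 \bmod n$ (as in the reduction preceding \rf{x}) — impossible; hence $Y$ is a unit modulo $\eu{P}$.

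It remains to take the $n$-th root. The residual sign $\lchooses{-1}{n}^{(1-e)\varsigma(\eta)}$ is $+1$ — automatically when $e = 1$ or when the residue characteristic is $2$, and for $e = 0$ by the parity built into the hypothesis $\theta \in 2\cdot I_f^+$ — so $\bigl(\beta[\theta]/Y^{\varsigma(\theta)/2}\bigr)^{n} \equiv 1 \bmod \eu{P}$. When $\eu{P} \nmid n$ the image of $\zeta$ in $\F_{\eu{P}}^{\ast}$ has exact order $n$, hence generates the group of $n$-th roots of unity there, so $\beta[\theta]/Y^{\varsigma(\theta)/2} \equiv \zeta^{\kappa} \bmod \eu{P}$ for a suitable $\kappa \in \ZM{n}$, which is the assertion. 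When $\eu{P} = (1-\zeta)$ (which can only happen if $e = 1$) an analogous but more elementary argument applies: $\zeta \equiv 1 \bmod \eu{P}$ and $\beta[\theta] \equiv 1 \bmod \eu{P}$ by the normalisation $\beta[\theta] \equiv 1 \bmod (1-\zeta)^2$, while $Y^{\varsigma(\theta)/2} \equiv 1 \bmod \eu{P}$ follows from $Y^{\varsigma(\theta)} \equiv 1 \bmod n$, which is in turn a consequence of \rf{norm} and that normalisation, so $\kappa = 0$ suffices.

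The genuine obstacle is precisely this last step: fixing the $n$-th root without a spurious sign, i.e.\ verifying that $\beta[\theta]^n$ is congruent to $\bigl(Y^{\varsigma(\theta)/2}\bigr)^{n}$ and not to its negative modulo $\eu{P}$ — a distinction that matters because $-1$ is not an $n$-th power of unity for odd $n$. This is exactly where divisibility of $\theta$ by $2$ inside the Fermat module is used: membership in $I_f$ kills the $\zeta$-contributions to $\beta[\theta]^n$, and the factor $2$ in $\varsigma(\theta)$ converts the embedding-dependent $\sqrt{\lchooses{-1}{n}n}$ into the honest rational integer $\bigl(\lchooses{-1}{n}n\bigr)^{\varsigma(\theta)/2}$, which is then matched against $(Y^n)^{\varsigma(\theta)/2}$.
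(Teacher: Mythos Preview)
Your route differs from the paper's. You expand $\beta[\theta]^n$ via \rf{beta1} directly for $\theta=2\eta$, identify the prefactor $(1-\zeta)^{\theta}=\bigl(\lchooses{-1}{n}n\bigr)^{\varsigma(\eta)}$, and then match it against $Y^n\equiv n^{1-e}\bmod\eu{P}$ obtained from the expansion of $(X^n-1)/(X-1)$. The paper instead writes $\beta[\theta]^n=\beta[\eta]^{2n}=Y^{n\varsigma(\eta)}\cdot(\beta[\eta]/\overline{\beta[\eta]})^{\,n}$ via the norm relation \rf{norm}, so that the factor $Y^{n\varsigma(\eta)}$ is extracted \emph{a priori} and the residual factor carries the exponent $\eta-\jmath\eta$, of relative weight zero; no separate computation of $Y^n\bmod\eu{P}$ is needed. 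Your path is more self-contained, but both lead to the same reduction $\beta[\theta]^n\equiv (\text{sign})\cdot Y^{n\varsigma(\eta)}\bmod\eu{P}$.

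There is, however, a genuine gap precisely at the step you single out as the crux. Your claim that $\lchooses{-1}{n}^{(1-e)\varsigma(\eta)}=+1$ ``by the parity built into $\theta\in 2\cdot I_f^+$'' is not justified: the hypothesis makes $\varsigma(\theta)=2\varsigma(\eta)$ even, which indeed turns $\bigl(\lchooses{-1}{n}n\bigr)^{\varsigma(\theta)/2}$ into an honest rational integer, but that integer equals $\lchooses{-1}{n}^{\varsigma(\eta)}\cdot n^{\varsigma(\eta)}$, and nothing forces $\varsigma(\eta)$ itself to be even. For $e=0$, $n\equiv 3\bmod 4$, and $\varsigma(\eta)$ odd (e.g.\ $n=7$, $\eta=\psi_2\in I_f^+$ with $\varsigma=1$), one obtains $(\beta[\theta]/Y^{\varsigma(\eta)})^n\equiv -1\bmod\eu{P}$, and for odd residue characteristic $-1\notin\langle\zeta\rangle\subset\F_{\eu{P}}^{\times}$ since that cyclic group has odd order $n$. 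A direct check gives $(1-\zeta)^{\eta-\jmath\eta}=(-1)^{|\eta|}\zeta^{\varphi(\eta)}=\lchooses{-1}{n}^{\varsigma(\eta)}$ when $\varphi(\eta)=0$, so the paper's own display \rf{pow} silently drops the very same sign. In the paper's sole application one has $\varsigma(\theta_0)=2$, hence the sign is $+1$ and nothing is lost; but as a proof of the lemma in the stated generality, your argument (like the paper's) needs either the extra hypothesis that $\varsigma(\eta)$ be even, or a $\pm$ in the conclusion.
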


\begin{proof}
  Let $\theta_0$ be an element of $I^+_f$, and let $\theta = 2
  \theta_0$. Note that from \rf{norm} we have $Y^{\varsigma(\theta_0)
    n} = \beta[\theta_0]^n \cdot \overline \beta[ \theta_0 ]^n$. Thus
  $ \beta[\theta]^n = \beta[\theta_0]^{2 n} = Y^{\varsigma(\theta_0)
    n} \cdot \left ( \beta[\theta_0]/\overline \beta[\theta_0]
  \right)^{n}$.  Using \rf{beta1} and the previous observations, we
  find:
\begin{eqnarray}
  \beta[\theta]^n & = & Y^{\varsigma(\theta_0)n} \cdot \left(\zeta^{c_X} \cdot
    (1-\zeta)^{1-e}\right)^{ (\theta_0 - \jmath \theta_0)} \cdot \left(1 +
    \frac{X-1}{1-\zeta}\right)^{(\theta_0 - \jmath \theta_0)} \nonumber \\
  & = & Y^{\varsigma(\theta_0) n} \cdot \zeta^{(2c_X + 1) \varphi(\theta_0)} \cdot \left(1 + D/(1-\zeta)\right)^{ (\theta_0 - \jmath
    \theta_0)} \nonumber \\
  \beta[\theta]^n & = & Y^{\varsigma(\theta_0) n} \cdot \left(1 + \frac{D}{1-\zeta}\right)^{(\theta_0 - \jmath
    \theta_0)}. \label{pow} 
\end{eqnarray}
Thus for any prime ideal $\eu{P} \ | \ D$ there is a $\kappa =
\kappa_{\eu{P}}(\theta) \in \ZM{n}$ such that
\begin{equation}
\label{kappa} 
  \beta[ \theta ]  \equiv  \zeta^{\kappa} \cdot Y^{\varsigma(\theta_0)} \bmod \eu{P}. 
\end{equation}
\end{proof}
In the sequel, we indicate how to choose $\theta$ such that
$\kappa=0$. In this case, the relation \rf{beta1} leads to a
$\eu{P}$-adic binomial series expansion for $\beta[\theta]$.

We will use the Voronoi identities -- see Lemma 1.0 in \cite{Jha}
--, which we remind here for convenience:

\begin{lemma}
\label{Jha}
Let $m$ be an even integer such that $2 \leq m \leq n-1$. Let $a$ be
an integer, coprime to $n$. Then

\begin{equation}
\label{voronoi}
  a^m \sum_{j=1}^{n-1} \left[ \frac{aj}{n} \right] j^{m-1} \equiv \frac{(a^{m+1}-a)B_m}{m} \mod n,
\end{equation}
where $B_m$ is the $m$-th Bernoulli number. In particular, for
$m=n-1$, we get

\begin{equation*}
  \sum_{j=1}^{n-1} \left[ \frac{aj}{n} \right] j^{n-2} \equiv \frac{a^{n}-a}{n} \mod n,
\end{equation*}
which is the Fermat quotient map of the $a$-th Fuchsian element,
$\varphi(\Theta_a)$.
\end{lemma}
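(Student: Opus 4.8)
The plan is to carry out Voronoi's classical argument. The point is that multiplying the residue classes $1, 2, \ldots, n-1$ by $a$ only permutes them, so the $m$-th power sum is unchanged; writing out the division-with-remainder $aj = n[aj/n] + r_j$ and expanding $r_j^{\,m}$ by the binomial theorem modulo $n^2$ produces a linear relation between $\sum_j j^m$ and $\sum_j j^{m-1}[aj/n]$, and it then remains to evaluate the ordinary power sum $\sum_{j=1}^{n-1} j^m$ modulo $n^2$ via the Bernoulli (Faulhaber) formula. Congruences involving $B_m$ and $B_m/m$ are to be read $n$-adically; this is legitimate for $2 \le m \le n-3$ because then $n - 1 \nmid m$, so von Staudt--Clausen makes $B_m$, and hence $B_m/m$ since $m$ is a unit modulo $n$, $n$-integral.

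\emph{Step 1: reduction with remainder.} For $1 \le j \le n-1$ write $aj = n\,[aj/n] + r_j$ with $0 < r_j < n$; the inequality is strict because $(a,n) = 1$ forces $n \nmid aj$, and since $a$ is a unit modulo $n$ the map $j \mapsto r_j$ permutes $\{1, \ldots, n-1\}$, whence $\sum_{j=1}^{n-1} r_j^{\,m} = \sum_{j=1}^{n-1} j^m$ exactly. Expanding $r_j^{\,m} = (aj - n[aj/n])^m$ by the binomial theorem and dropping terms divisible by $n^2$ yields $r_j^{\,m} \equiv (aj)^m - m\,n\,(aj)^{m-1}[aj/n] \pmod{n^2}$; summing over $j$ and rearranging gives
\[
 m\,n\,a^{m-1}\sum_{j=1}^{n-1} j^{m-1}[aj/n] \;\equiv\; (a^m - 1)\sum_{j=1}^{n-1} j^m \pmod{n^2},
\]
a congruence I will call $(\star)$.

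\emph{Step 2: the power sum modulo $n^2$, and the case $m \le n-3$.} From the Bernoulli formula $\sum_{j=1}^{n-1} j^m = \frac{1}{m+1}\sum_{k=0}^{m}\binom{m+1}{k} B_k\, n^{m+1-k}$ one reads off, for $2 \le m \le n-3$, that every term with $k \le m-1$ has $n$-adic valuation at least $2$ --- the factor $n^{m+1-k}$ already contributes valuation $\ge 2$, while $\binom{m+1}{k}$ is an integer, $1/(m+1)$ is a unit as $m+1 < n$, and $B_k$ is $n$-integral because $n-1 \nmid k$ --- whereas the $k = m$ term equals $B_m n$. Hence $\sum_{j=1}^{n-1} j^m \equiv B_m n \pmod{n^2}$. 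Substituting this into $(\star)$, cancelling one factor of $n$ (both sides are divisible by $n$), then multiplying by $a$ and by $m^{-1} \bmod n$, I obtain
\[
 a^m \sum_{j=1}^{n-1} j^{m-1}[aj/n] \;\equiv\; \frac{(a^{m+1} - a) B_m}{m} \pmod n,
\]
which is \rf{voronoi} for $m \le n-3$.

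\emph{Step 3: the case $m = n-1$ and the displayed corollary.} For $m = n-1$ one has $n - 1 \mid m$, $B_{n-1}$ is not $n$-integral, and Step 2 fails; instead I would argue directly in $(\star)$. Here $a^m - 1 = n q_a$ with $q_a = (a^{n-1} - 1)/n \in \Z$ the Fermat quotient, and Fermat's little theorem gives $\sum_{j=1}^{n-1} j^{n-1} \equiv n - 1 \equiv -1 \pmod n$, so (the factor $n$ in front makes only $\sum_j j^{n-1} \bmod n$ relevant) the right-hand side of $(\star)$ is $\equiv -\,n q_a \pmod{n^2}$; using $a^{n-2} \equiv a^{-1}$ and $n - 1 \equiv -1 \pmod n$ on the left, $(\star)$ collapses to
\[
 \sum_{j=1}^{n-1} j^{n-2}[aj/n] \;\equiv\; a q_a \;=\; \frac{a^n - a}{n} \pmod n .
\]
Since $a^{n-1} \equiv 1 \pmod n$ this is \rf{voronoi} for $m = n-1$, provided $\frac{(a^n - a) B_{n-1}}{n-1} \equiv \frac{a^n - a}{n} \pmod n$; and this holds because $\frac{(a^n-a)B_{n-1}}{n-1} = \frac{a^n - a}{n} \cdot \frac{n B_{n-1}}{n-1}$ together with $n B_{n-1} \equiv -1 \equiv n - 1 \pmod n$ by von Staudt--Clausen. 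The identification with $\varphi(\Theta_a)$ is then pure bookkeeping: writing $\Theta_a = \sum_{c=1}^{n-1} [ac/n]\,\sigma_c^{-1} = \sum_{d=1}^{n-1} [a d^{-1}/n]\,\sigma_d$ and applying the Fermat quotient functional $\varphi$ gives $\varphi(\Theta_a) = \sum_{d} d\,[a d^{-1}/n] \equiv \sum_{j=1}^{n-1} j^{n-2}[aj/n] \pmod n$. The main obstacle is the power-sum evaluation of Step 2: one must keep precise track of exactly which Bernoulli numbers are $n$-integral, and must isolate $m = n-1$ as a genuinely separate case, since there the naive reduction of $\sum_{j=1}^{n-1} j^m$ modulo $n^2$ is simply false; the permutation argument, the binomial expansion, and the unit manipulations modulo $n$ are all routine.
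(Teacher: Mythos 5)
Your proof is correct. Note, however, that the paper does not prove this lemma at all: it is quoted verbatim from Lemma 1.0 of Jha's monograph (the reference \cite{Jha}), so there is no internal argument to compare against. What you supply is the classical Voronoi argument, and every step checks out: the permutation $j \mapsto r_j$ of residues, the truncation of $(aj - n[aj/n])^m$ modulo $n^2$ giving your congruence $(\star)$, the evaluation $\sum_{j=1}^{n-1} j^m \equiv B_m n \bmod n^2$ via Faulhaber together with von Staudt--Clausen (valid precisely because $n-1 \nmid k$ for $k \leq m-1 \leq n-4$), and the separate treatment of $m = n-1$, where you correctly observe that the pole of $B_{n-1}$ at $n$ is compensated by $n \mid (a^n - a)$ and that $n B_{n-1} \equiv -1 \bmod n$, recovering the Fermat quotient. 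The identification with $\varphi(\Theta_a)$ via $\sigma_c^{-1} = \sigma_{c^{-1}}$ and $d \equiv j^{n-2}$ is also right. Since $n$ is odd, the even values of $m$ in $[2, n-1]$ are exactly those you cover ($m \leq n-3$ or $m = n-1$), so the case analysis is complete. In short: the statement is an imported classical fact, and your write-up is a valid self-contained proof of it.
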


\begin{lemma}
\label{simple}
Let $\psi_k$ denote the $k$-th Fueter element. Then, there exists a
linear combination $\theta = \sigma \psi_k + \tau \psi_l \in I$ with
$\sigma, \tau \in G$ and $1 \leq k , l < n$ such that $\phi^{(1)}
(\theta) = 0$ and $\phi^{(-1)} (\theta) \neq 0$.
\end{lemma}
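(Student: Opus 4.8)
The plan is to reduce the statement to explicit formulas for $\phi^{(1)}(\psi_k)$ and $\phi^{(-1)}(\psi_k)$ and then to make a finite combinatorial choice of Fueter indices and of the coefficients $\sigma=\sigma_a$, $\tau=\sigma_b$, exploiting the homogeneity $\phi^{(i)}(\sigma_a\theta)=a^i\phi^{(i)}(\theta)$ recorded around \rf{phiprop}. Throughout I assume $n\ge17$, which is all that is needed in the sequel (where in fact $n>163\cdot10^6$).

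First I would evaluate the two moments on the Fueter elements. Using $\psi_1=\Theta_2$ and $\psi_k=\Theta_{k+1}-\Theta_k$ for $k\ge2$ (so $\Theta_1=0$), together with the Voronoi identities of Lemma \ref{Jha} --- the instance $m=n-1$ for $\phi^{(1)}=\varphi$, and the instance $m=2$, where $B_2=1/6$, for $\phi^{(-1)}$, recalling $\Theta_j=\sum_c[jc/n]\sigma_c^{-1}$ and $\sigma_c^{-1}=\sigma_{c^{-1}}$ --- one obtains, modulo $n$,
\[ p_k:=\phi^{(1)}(\psi_k)\equiv\frac{(k+1)^n-(k+1)}{n}-\frac{k^n-k}{n},\qquad m_k:=\phi^{(-1)}(\psi_k)\equiv\frac{k^2+k+1}{12\,k(k+1)}. \]
Then for $\theta=\sigma_a\psi_k+\sigma_b\psi_l$ with $a,b$ prime to $n$ one has, by linearity, $\phi^{(1)}(\theta)=ap_k+bp_l$ and $\phi^{(-1)}(\theta)=a^{-1}m_k+b^{-1}m_l$, so it suffices to choose $k,l\in\{1,2,3\}$ and $a,b$ making the first expression vanish and the second not.

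I would then split into two cases. If $p_k=0$ for some $k\in\{1,2,3\}$, take $\theta=2\psi_k$ (that is, $a=b=1$ and $l=k$): then $\phi^{(1)}(\theta)=0$, whereas $\phi^{(-1)}(\theta)=2m_k\neq0$, since $m_1=\tfrac18$, $m_2=\tfrac{7}{72}$, $m_3=\tfrac{13}{144}$ are units modulo $n$ for $n\ge17$. If instead $p_1,p_2,p_3$ are all nonzero, I claim that the three products $p_1m_1,p_2m_2,p_3m_3$ are not all equal; granted this, pick $k\neq l$ in $\{1,2,3\}$ with $p_km_k\neq p_lm_l$, set $a=1$ and $b\equiv-p_kp_l^{-1}\bmod n$ (a nonzero residue), and obtain $\phi^{(1)}(\theta)=p_k-p_k=0$ and $\phi^{(-1)}(\theta)=p_k^{-1}(p_km_k-p_lm_l)\neq0$. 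In either case $\theta=\sigma\psi_k+\tau\psi_l$ lies in the ideal $I$ and has the required properties, which proves the lemma.

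It remains to establish the claim. Suppose $p_1m_1=p_2m_2=p_3m_3=:t$; then $t\neq0$, and $p_i=t/m_i$ gives $p_1=8t$, $p_2=\tfrac{72}{7}t$, $p_3=\tfrac{144}{13}t$. On the other hand the telescoping yields $p_1=\eu{q}(2)$, $p_1+p_2=\eu{q}(3)$, $p_3=\eu{q}(4)-\eu{q}(3)$ with $\eu{q}(j):=(j^n-j)/n\bmod n$, and the Fermat quotient obeys the cocycle relation $\eu{q}(ab)\equiv b\,\eu{q}(a)+a\,\eu{q}(b)\bmod n$, so $\eu{q}(4)\equiv4\,\eu{q}(2)\bmod n$. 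Eliminating $\eu{q}(2),\eu{q}(3)$ and cancelling $t$ then forces the purely numerical congruence $96\cdot13\equiv144\cdot7\pmod n$, i.e. $n\mid240$, impossible for $n\ge17$. This last point is the crux: because $\sigma$ and $\tau$ must be actual group elements (in particular $-1$ is not available), two Fueter elements need not suffice, and it is precisely the third one, together with the multiplicativity of the Fermat quotient, that rules out the degenerate configuration in which the passage to the Fermat module $I_f$ would annihilate $\phi^{(-1)}$ as well.
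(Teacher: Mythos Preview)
Your argument is correct for $n\ge 17$, which suffices for the application. The overall strategy coincides with the paper's: both compute $\phi^{(\pm1)}(\psi_k)$ via the Voronoi identities of Lemma~\ref{Jha}, reduce the problem to finding two indices $k,l$ with $P(k)\ne P(l)$ where $P(t):=\phi^{(1)}(\psi_t)\,\phi^{(-1)}(\psi_t)$, and then produce $\sigma,\tau$ by solving one linear congruence in $\F_n$. The difference lies solely in how the non-constancy of $P$ is established. The paper assumes $P$ constant on the full Fueter range and derives a contradiction through a recursion for $\varphi(k)$ combined with the reflexion identity $\varphi(n-k)\equiv -1-\varphi(k)$, treating the case $n\equiv 1\bmod 6$ (where $\phi^{(-1)}(\psi_k)$ may vanish) separately and checking $n\in\{3,7\}$ by hand. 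You restrict to $k\in\{1,2,3\}$ and exploit the Fermat--quotient cocycle $\eu{q}(4)\equiv 4\,\eu{q}(2)$ to collapse the equal-product hypothesis to the single numerical condition $n\mid 240$. This is noticeably shorter and sidesteps the case split, at the harmless price of excluding the primes $n\in\{3,5,7,13\}$; the paper's longer argument delivers the statement for all odd $n$. Your closing aside about ``$-1$ not being available'' is somewhat beside the point, since $\sigma_{n-b}\in G$ already realises the sign change on $\phi^{(1)}$; the genuine obstruction you correctly identify is that the \emph{single} constraint $\phi^{(1)}=0$ may accidentally force $\phi^{(-1)}=0$ when $P(k)=P(l)$, and the third index is what breaks that degeneracy.
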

The proof of this Lemma is elementary, using the Voronoi relations
\rf{voronoi}; since the details are rather lengthy, they will
be given in the Appendix.

The following two lemmata contain computational information for binomial
series developments that we shall use below. First, we remind that $\rho_0$ is the following additive map:
\begin{equation*}
\rho_0 :  \F_n [G]  \ra \Q(\zeta) \quad
  \theta = \sum_{c=1}^{n-1} n_c \sigma_c  \mapsto  \sum_{c \in P} \frac{n_c}{1- \zeta^c}
\end{equation*}

\begin{lemma}
\label{dvpt}
Let $D$ be an indeterminate. Let $\theta = \sum_{c=1}^{n-1} n_c \sigma_c \in \Z[G]$ and $f[\theta]
= \left( 1 + \frac{D}{1-\zeta} \right)^{\theta/n} \in \K[[ D ]]$. 
Let $0 < N < n$ be a fixed integer. Then, 
\begin{equation*}
f[\theta] = 1 + \sum_{k=1}^N \frac{a_k[\theta]}{k! n^k} D^k + O(D^{N+1}),
\end{equation*}
where, for $1 \leq k \leq N$, we have
\begin{equation*}
a_k[\theta] = \rho_0^k [\theta] + O\left( \frac{n}{(1-\zeta)^k}\right).
\end{equation*}
In the above identity, $a_k[ \theta ], \rho_0^k[ \theta ] \in \Z[
\zeta, \frac{1}{n} ]$ are not integral, but their difference is an
algebraic integer $a_k[ \theta ] - \rho_0^k[ \theta ] \in
\frac{n}{(1-\zeta)^k} \cdot \Z[ \zeta ]$.
\end{lemma}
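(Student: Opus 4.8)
# Proof Proposal for Lemma \ref{dvpt}

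The plan is to work entirely in the formal power series ring $\K[[D]]$ and extract the coefficients of $f[\theta] = \left(1 + \frac{D}{1-\zeta}\right)^{\theta/n}$ by expanding each factor $\left(1 + \frac{D}{1-\zeta^c}\right)^{n_c/n}$ via the binomial series, then multiplying. First I would record the single-factor expansion: for each $c \in P$,
\begin{equation*}
\left(1 + \frac{D}{1-\zeta^c}\right)^{n_c/n} = 1 + \sum_{k=1}^{N} \binom{n_c/n}{k} \frac{D^k}{(1-\zeta^c)^k} + O(D^{N+1}),
\end{equation*}
where $\binom{n_c/n}{k} = \frac{(n_c/n)(n_c/n - 1)\cdots(n_c/n - k+1)}{k!}$. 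The key arithmetic observation is that $k! \cdot n^k \cdot \binom{n_c/n}{k} = n_c(n_c - n)(n_c - 2n)\cdots(n_c - (k-1)n)$, which is a rational \emph{integer} congruent to $n_c^k$ modulo $n$ (indeed modulo $n$ each factor $n_c - jn \equiv n_c$). So writing $f[\theta] = 1 + \sum_{k=1}^N \frac{a_k[\theta]}{k!\, n^k} D^k + O(D^{N+1})$ forces $a_k[\theta] \in \Z[\zeta, \tfrac1n]$, and the claim $a_k[\theta] - \rho_0^k[\theta] \in \frac{n}{(1-\zeta)^k}\Z[\zeta]$ is what must be proven.

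Next I would assemble the product over $c$. Since the $c$-th factor contributes $1 + \sum_{j\geq 1} e_{c,j} D^j$ with $e_{c,j} = \binom{n_c/n}{j}/(1-\zeta^c)^j$, the coefficient of $D^k$ in $f[\theta]$ is $\sum e_{c_1,j_1}\cdots e_{c_r,j_r}$ summed over tuples of distinct indices $c_1,\dots,c_r \in P$ and positive $j_1+\cdots+j_r = k$. The ``leading'' contribution — the one that will produce $\rho_0^k[\theta]$ — comes from the terms where each $j_i = 1$ but the $c_i$ are allowed to repeat; more precisely, $\rho_0^k[\theta] = \left(\sum_{c} \frac{n_c}{1-\zeta^c}\right)^k$ expands by the multinomial theorem into a sum over all functions from $\{1,\dots,k\}$ to $P$. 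I would compare this expansion term-by-term with the true coefficient $k!\,n^k \cdot [D^k]f[\theta]$, and track the discrepancy in two sources: (i) for a fixed set of distinct $c$'s each taken with multiplicity $m_c$, the exact product of binomials $\prod_c \big(k_c! \, n^{k_c} \binom{n_c/n}{k_c}\big)$ versus $\prod_c n_c^{k_c}$ differs by a multiple of $n$ (by the congruence above, since $n_c - jn \equiv n_c \bmod n$), and the denominator $\prod_c (1-\zeta^c)^{k_c}$ has total $\lambda$-valuation $k$; (ii) the terms where some $j_i \geq 2$ are ``lower order'' in $D$ from the single-factor viewpoint but still contribute to $[D^k]$ — here the point is that $\binom{n_c/n}{j}$ for $j \geq 2$ carries an \emph{extra} factor of $\frac1n$ relative to its numerator's integrality, i.e. $j!\,n^{j}\binom{n_c/n}{j}$ is divisible by $n$ whenever $j \geq 2$ (since two of the factors $n_c, n_c-n,\dots$ are $\equiv 0$... no — rather, one sees $n^j \binom{n_c/n}{j}/ (j!)^{-1}$... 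I must be careful) — the correct statement is that $k!\,n^k$ times such a term, compared to the matching multinomial term of $\rho_0^k$, again lands in $n \cdot \Z[\zeta, \tfrac1n]$ with denominator exactly $(1-\zeta)^k$, hence in $\frac{n}{(1-\zeta)^k}\Z[\zeta]$.

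The main obstacle — and the step I would spend the most care on — is the bookkeeping in (ii): showing that \emph{every} term of $k!\,n^k\,[D^k]f[\theta]$ that does not correspond to a pure multinomial term of $\rho_0^k[\theta]$ acquires at least one extra factor of $n$ after clearing the $k!\,n^k$, uniformly, while never accumulating $\lambda$-denominator worse than $(1-\zeta)^k$. The $\lambda$-denominator bound is automatic since every factor $(1-\zeta^c)^{-1}$ contributes exactly $\lambda^{-1}$ up to a unit and the total $D$-degree is $k$; the subtlety is purely the $n$-adic count. I would handle it by the clean sublemma: for $1 \leq j$ and any integer $m$ coprime to $n$, $\;k!\,n^{\,?}$... concretely, $n^{j-1} \mid \big(j!\,\binom{m/n}{j}\,n^{j}\big)/\gcd$ is false in general, so instead I would argue via the identity $j!\,n^j \binom{m/n}{j} = m(m-n)\cdots(m-(j-1)n)$ directly: this is $\equiv m^j \bmod n$, and the \emph{difference} $m(m-n)\cdots(m-(j-1)n) - m^j$ is visibly divisible by $n$. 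Feeding this into the multinomial comparison, every cross-term discrepancy is a $\Z$-linear combination of products of such differences times a monomial in the $(1-\zeta^c)^{-1}$ of total degree $k$, hence lies in $\frac{n}{(1-\zeta)^k}\Z[\zeta]$, which is exactly the assertion. The convergence/$O(D^{N+1})$ statement is formal in $\K[[D]]$ and needs no estimate; the restriction $N < n$ ensures $k! $ and the binomial coefficients have no $n$ in the denominator beyond the explicit $n^k$, so that the factorization of denominators claimed in the statement is valid.
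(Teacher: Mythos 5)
Your proof is correct and rests on the same arithmetic kernel as the paper's, namely the congruence $k!\,n^k\binom{n_c/n}{k}=n_c(n_c-n)\cdots(n_c-(k-1)n)\equiv n_c^k \bmod n$ together with the observation that each $(1-\zeta^c)^{-1}$ costs exactly one power of $\lambda$; the only difference is organizational, since the paper packages the product over $c$ as an induction on the number of nonzero coefficients of $\theta$ (using additivity of $\rho_0$ and a two-factor Cauchy product at each step), whereas you carry out the full multinomial comparison with $\rho_0^k[\theta]$ in one pass. The brief false start about $j\ge 2$ terms acquiring extra factors of $n$ is explicitly retracted and replaced by the correct identity, so no gap remains.
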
 

\begin{proof}
  Let $\theta = \sum_c n_c \sigma_c$ and $m = m(\theta) = | \ \{ c \ :
  \ n_c \neq 0 \} \ |$ be the number of non vanishing coefficients of
  $\theta$.  We prove this result by induction on $m$. First, note
  that
\begin{equation*}
\binom{n_c/n}{k} = \frac{1}{k!} \cdot \frac{n_c^k}{n^k} \cdot (1 + O(n)).
\end{equation*}
Thus, if $\theta = n_c \sigma_c$ and $m = 1$, then:
\begin{equation*}
  f[\theta] = 1 + \sum_{k=1}^{n-1} \frac{1}{k!} \cdot \frac{n_c^k}{n^k} \cdot (1 + O(n)) \cdot \frac{D^k}{(1-\zeta)^k} = 1 + \sum_{k=1}^N \frac{a_k[\theta]}{k! n^k} D^k + O(D^{N+1}),
\end{equation*}
where, for $1 \leq k \leq N$,
\begin{equation*}
a_k[\theta] = \rho_0^k [\theta] + O\left( \frac{n}{(1-\zeta)^k}\right),
\end{equation*}
which confirms the claim for $m = 1$.  Suppose the claim holds for all
$j \leq m$ and let $\theta = \theta_1 + \theta_2$ with $m(\theta_i) <
m$ and $m(\theta) = m$. Then,
\begin{equation*}
\begin{array}{lcl}
f[\theta] & = & \left( 1 + \frac{D}{1-\zeta} \right)^{\theta_1/n} \cdot \left( 1 + \frac{D}{1-\zeta} \right)^{\theta_2/n}\\
& = & 1 + \sum_{k=1}^N \alpha_k[\theta] D^k + O(D^{N+1}),
\end{array}
\end{equation*}
where for $k < n-1$ we have
\begin{equation*}
\begin{array}{lcl}
  \alpha_k[\theta] & = & \sum_{j=1}^k \frac{a_j [\theta_1]}{n^j j! (1-\zeta)^l} \cdot \frac{a_{k-j}[\theta_2]}{n^{k-j} (k-j)! (1-\zeta)^{k-l}} \cdot (1 + O(n))\\
  & = & \frac{1}{k!n^k}  \left(  \rho_0 [\theta_1] + \rho_0 [\theta_2] \right)^{k} + O\left( \frac{n}{k! n^k(1-\zeta)^k}\right)\\
  & = & \frac{1}{k!n^k} \cdot \rho_0^k [\theta] + O\left( \frac{n}{k! n^k(1-\zeta)^k}\right) = 
  \frac{1}{k!n^k} \cdot \left( \rho_0^k [\theta] + O( n/(1-\zeta)^k\right)
\end{array}
\end{equation*}
This proves the claim by complete induction.
\end{proof}

\begin{lemma}
\label{RemInt}
By proceeding like in Lema 8 of \cite{Mi2}, we notice that $\frac{a_k[\theta]}{k!} \in \Z[\zeta]$ (notation is different between both articles).
\end{lemma}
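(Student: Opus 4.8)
The plan is to extract from the proof of Lemma~\ref{dvpt} an explicit closed form for $a_k[\theta]$ and then to isolate the single arithmetic fact responsible for cancelling the factor $k!$; everything else is the bookkeeping of \cite[Lemma~8]{Mi2}. Writing $\theta=\sum_{c=1}^{n-1}n_c\sigma_c$ and expanding
\[
f[\theta]=\prod_{c=1}^{n-1}\Bigl(1+\frac{D}{1-\zeta^{c}}\Bigr)^{n_c/n}
=\prod_{c=1}^{n-1}\ \sum_{j\ge 0}\binom{n_c/n}{j}\,\frac{D^{\,j}}{(1-\zeta^{c})^{j}},
\]
I would read off the coefficient of $D^{k}$ (for $k\le N<n$) and compare with $f[\theta]=1+\sum_{k}\frac{a_k[\theta]}{k!\,n^{k}}D^{k}+O(D^{N+1})$. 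Distributing $n^{k}=\prod_c n^{j_c}$ over the blocks of each composition $j_1+\dots+j_{n-1}=k$ and using $n^{\,j}\binom{m/n}{j}=\frac{1}{j!}\,m(m-n)\cdots\bigl(m-(j-1)n\bigr)$, this yields
\[
a_k[\theta]=\sum_{j_1+\dots+j_{n-1}=k}\ \frac{k!}{j_1!\cdots j_{n-1}!}\ \Bigl(\prod_{c=1}^{n-1}\ \prod_{i=0}^{j_c-1}(n_c-in)\Bigr)\ \prod_{c=1}^{n-1}\frac{1}{(1-\zeta^{c})^{j_c}}.
\]

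Dividing by $k!$ leaves $\frac{a_k[\theta]}{k!}=\sum_{j_1+\dots+j_{n-1}=k}\bigl(\prod_c \tfrac{1}{j_c!}\prod_{i=0}^{j_c-1}(n_c-in)\bigr)\prod_c(1-\zeta^{c})^{-j_c}$, and the crux is that each inner quotient $\tfrac{1}{j_c!}\prod_{i=0}^{j_c-1}(n_c-in)$ is a rational integer. Indeed $\prod_{i=0}^{j_c-1}(n_c-in)$ is a product of $j_c$ consecutive terms of an arithmetic progression of common difference $n$; since $j_c\le k<n$ and $n$ is prime we have $\gcd(j_c!,n)=1$, so the classical fact that a product of $j$ consecutive terms of an arithmetic progression whose common difference is prime to $j!$ is divisible by $j!$ gives $j_c!\mid\prod_{i=0}^{j_c-1}(n_c-in)$. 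Hence $\frac{a_k[\theta]}{k!}$ is an integral combination of the monomials $\prod_c(1-\zeta^{c})^{-j_c}$; since each $\frac{1-\zeta}{1-\zeta^{c}}$ is a unit of $\Z[\zeta]$ and $\sum_c j_c=k$, this places $\frac{a_k[\theta]}{k!}$ in $\frac{1}{(1-\zeta)^{k}}\Z[\zeta]$, which in the normalisation of \cite{Mi2} --- where, as the parenthetical remark in the statement indicates, the series is written in the variable $D/(1-\zeta)$ so that the factor $(1-\zeta)^{-k}$ is absorbed into the monomial --- reads precisely $\frac{a_k[\theta]}{k!}\in\Z[\zeta]$. (In the present normalisation the same computation refines Lemma~\ref{dvpt}: the spurious factor $k!$ in the denominator of $a_k[\theta]/k!$ disappears, no denominator being introduced beyond the powers of $1-\zeta$ already present.)

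I expect the only genuine obstacle to be the reconciliation of the two normalisations: one must pin down, once and for all, the dictionary between the coefficients $a_k[\theta]$ of the present paper and those of \cite[Lemma~8]{Mi2}, keeping careful track of the powers of $\lambda=1-\zeta$ --- and hence, via $n=\lambda^{\,n-1}\cdot(\text{unit})$, of $n$ --- that migrate between the $D^{k}$ and the $(D/(1-\zeta))^{k}$ conventions, and of the innocuousness of the $O(n)$ remainders of Lemma~\ref{dvpt}, which live in $\frac{n}{(1-\zeta)^{k}}\Z[\zeta]$ and survive division by $k!$ because $\gcd(k!,n)=1$. Once that dictionary is fixed, the divisibility assertion is the verbatim conclusion of \cite[Lemma~8]{Mi2}.
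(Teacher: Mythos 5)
Your proposal is correct, and it supplies in full the computation that the paper delegates entirely to the citation of Lemma~8 of \cite{Mi2} (the paper gives no argument of its own for this statement). Your closed form for $a_k[\theta]$ obtained by multiplying out the $n-1$ binomial series is right, the identity $n^{j}\binom{n_c/n}{j}=\frac{1}{j!}\prod_{i=0}^{j-1}(n_c-in)$ is right, and the one genuinely arithmetic input --- that $j!$ divides a product of $j$ consecutive terms of an arithmetic progression whose common difference is coprime to $j!$, applicable here because $j_c\leq k\leq N<n$ and $n$ is prime --- is exactly the mechanism that kills the factorial. You also correctly diagnose the normalisation issue that the lemma's parenthetical remark alludes to: taken literally, $\frac{a_k[\theta]}{k!}\in\Z[\zeta]$ is inconsistent with Lemma~\ref{dvpt}'s own observation that $a_k[\theta]$ is not integral, and your computation shows that the honest statement in this paper's notation is $\frac{a_k[\theta]}{k!}\in\frac{1}{(1-\zeta)^{k}}\Z[\zeta]$, equivalently $\frac{b_k[\theta]}{k!}\in\Z[\zeta]$ with $b_k[\theta]=(1-\zeta)^{k}a_k[\theta]$ --- which is precisely the form in which the integrality is used in Lemma~\ref{regular}. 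One small caveat: your closing remark that the $O(n/(1-\zeta)^{k})$ remainders of Lemma~\ref{dvpt} ``survive division by $k!$ because $\gcd(k!,n)=1$'' is not by itself a proof that those remainders are divisible by $k!$ in $\Z[\zeta]$; but this is immaterial, since your direct closed-form computation bypasses the $\rho_0^{k}+O(\cdot)$ decomposition altogether.
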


As a consequence, we may deduce that matrices built from the first
coefficients occurring in some binary series developments are regular.
\begin{lemma}
\label{regular}
Let $\theta = \sum_{c=1}^{n-1} n_c \sigma_c \in \Z[G]$ such that
$\phi^{(-1)}(\theta) \not\equiv 0 \mod n$, let $f[\theta] = \left( 1 +
  \frac{D}{1-\zeta} \right)^{\theta/n}$ and $0<N<n-1$ be a fixed integer. 
Then, 
\[ f[\theta] = 1 + \sum_{k=1}^N \frac{b_k[\theta]}{k! n^k (1-\zeta)^k}
D^k + O(D^{N+1}), \quad \hbox{with} \quad \frac{b_k[\theta]}{k!} \in
\Z[\zeta]. \] Moreover, if $J \subset \Gal(\Q[ \zeta ]/\Q)$ is a
subset with $| J | = N$, then the matrix\footnote{We shall apply this 
Lemma below, in a context in which $J$ satisfies the additional
condition that $i + j \neq n$ for any $i, j$ with $\sigma_i \in J$ 
and $\sigma_j \in J$.}
\[ A_N = \left(b_k[\sigma_c \theta] \right)_{ k=0;\sigma_c \in J}^{N-1}
\in \GL(\K, N) \].
\end{lemma}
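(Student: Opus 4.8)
The plan is to treat the two assertions separately. The first is a cosmetic reformulation of Lemma~\ref{dvpt}, and the regularity of $A_N$ will be obtained by reducing the matrix modulo the unique prime $\lambda=(1-\zeta)$ of $\Z[\zeta]$ above $n$ and recognizing the reduction as a Vandermonde matrix on pairwise distinct nodes.

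First I would set $b_k[\theta]:=(1-\zeta)^k a_k[\theta]$, with $a_k[\theta]$ the coefficients produced by Lemma~\ref{dvpt}. Substituting into the development furnished there immediately yields the claimed shape of $f[\theta]$, while
\[ \frac{b_k[\theta]}{k!}=(1-\zeta)^k\,\frac{a_k[\theta]}{k!}\in\Z[\zeta] \]
follows from Lemma~\ref{RemInt}. In particular each $b_k[\theta]$ lies in $\Z[\zeta]$, and reading the expansion with its $k=0$ term one has $b_0[\sigma_c\theta]=1$; hence all entries of $A_N$ are algebraic integers and the matrix makes sense over $\K$.

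Next, for the regularity, I would work modulo $\lambda$. From the remainder estimate of Lemma~\ref{dvpt}, $a_k[\theta]-\rho_0^k[\theta]\in\frac{n}{(1-\zeta)^k}\Z[\zeta]$, so with $\rho[\theta]=(1-\zeta)\rho_0[\theta]\in\Z[\zeta]$ one gets
\[ b_k[\theta]-\rho[\theta]^k=(1-\zeta)^k\bigl(a_k[\theta]-\rho_0^k[\theta]\bigr)\in n\,\Z[\zeta],\qquad\text{i.e. } b_k[\theta]\equiv\rho[\theta]^k\bmod\lambda. \]
Since $\frac{1-\zeta}{1-\zeta^c}=(1+\zeta+\dots+\zeta^{c-1})^{-1}\equiv c^{-1}\bmod\lambda$, this gives $\rho[\theta]\equiv\sum_c n_c c^{-1}=\phi^{(-1)}(\theta)\bmod\lambda$, and by the relation $\phi^{(-1)}(\sigma_c\theta)=c^{-1}\phi^{(-1)}(\theta)$ noted above one obtains $b_k[\sigma_c\theta]\equiv\bigl(c^{-1}\phi^{(-1)}(\theta)\bigr)^k\bmod\lambda$ for every $\sigma_c\in J$. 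Thus $A_N\bmod\lambda$ is (up to transposition) the $N\times N$ Vandermonde matrix on the nodes $x_c=c^{-1}\phi^{(-1)}(\theta)\in\Z[\zeta]/\lambda\cong\F_n$, $\sigma_c\in J$. As $\phi^{(-1)}(\theta)\not\equiv0\bmod n$ by hypothesis and $c\mapsto c^{-1}$ is injective on $(\Z/n\Z)^{\ast}$, these nodes are pairwise distinct, so the Vandermonde determinant is nonzero in $\F_n$; hence $\det A_N\not\equiv0\bmod\lambda$, a fortiori $\det A_N\ne0$, i.e. $A_N\in\GL(\K,N)$.

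The step I expect to be the main obstacle is the congruence $b_k[\theta]\equiv\rho[\theta]^k\bmod\lambda$: one must argue carefully that although $a_k[\theta]$ and $\rho_0^k[\theta]$ are not algebraic integers, the rescaled quantity $b_k[\theta]$ is, and that passing to the residue field kills precisely the error term $O\!\left(n/(1-\zeta)^k\right)$ of Lemma~\ref{dvpt} together with the fact that $\frac{b_k[\theta]}{k!}$ remains integral. Once this is secured, the conclusion is just the classical nonvanishing of a Vandermonde determinant with distinct nodes; note that the footnote's extra condition $i+j\ne n$ on $J$ plays no role for this lemma, it only enters the later application.
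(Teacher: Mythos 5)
Your proposal is correct and follows essentially the same route as the paper: define $b_k[\theta]=(1-\zeta)^k a_k[\theta]$, reduce modulo $\lambda=(1-\zeta)$ using the error term of Lemma~\ref{dvpt} and the congruence $\frac{1-\zeta}{1-\zeta^{c}}\equiv c^{-1}\bmod\lambda$, and identify $A_N\bmod\lambda$ as a Vandermonde matrix on the distinct nodes $\phi^{(-1)}(\theta)/c$, whose determinant is nonzero since $\phi^{(-1)}(\theta)\not\equiv 0\bmod n$. Your observation that the footnote's condition $i+j\neq n$ is not needed here is also consistent with the paper's argument.
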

\begin{proof}
  Let $\lambda = 1-\zeta$; we show that the determinant of $A_N$ is
  not zero modulo $\lambda$. Using Lemma \ref{dvpt}, we know that we
  have a development of symbolic power series
\begin{equation*}
f[\theta] = 1 + \sum_{k=1}^N \frac{a_k[\theta]}{k! n^k} D^k + O(D^{N+1}),
\end{equation*}
where
\begin{equation*}
a_k[\theta] = \rho_0^k [\theta] + O\left( \frac{n}{(1-\zeta)^k}\right).
\end{equation*}
By definition, $(1-\zeta)^k \cdot a_{k} [\sigma_c \theta] \in
\Z[\zeta]$ for all $\sigma_c \in G$. Let $b_k[\theta] = (1-\zeta)^k
\cdot a_{k} [\theta] \in \Z[\zeta]$. Then, according to Lemma
\ref{dvpt},

\begin{equation*}
\begin{array}{rcl}
  b_k[\sigma_c \theta]  & = & (1-\zeta)^k \cdot \left( \rho_0^k [\sigma_c \theta] + O\left( \frac{n}{(1-\zeta)^k}\right) \right)\\
  & = & \rho^k [\sigma_c \theta] + O(n)  =  \left( \sum_{l=1}^{n-1} n_l \cdot \frac{1-\zeta}{1-\zeta^{lc}} \right)^k + O(n) \\
  &  \equiv &  \left( \sum_{l=1}^{n-1} \frac{n_l}{lc} \right)^k \mod \lambda 
  \equiv \left( \frac{\phi^{(-1)} [\theta]}{c} \right)^k \mod \lambda.
\end{array}
\end{equation*}
Thus, $\det A_N \equiv \left| \left( \left( \frac{\phi^{(-1)}
        [\theta]}{c} \right)^k\right)_{k=0,\sigma_c \in J}^{N-1} \right|
\mod \lambda$. We have obtained a Vandermonde determinant:
\begin{equation*}
\det A_N \equiv \left( \phi^{(-1)} [\theta] \right)^{N(N-1)/2} \cdot \prod_{i \neq j; \sigma_i, \sigma_j \in J} \left( \frac{1}{i} - \frac{1}{j}\right) \mod \lambda.
\end{equation*}
By hypothesis, $\phi^{(-1)} [\theta] \not\equiv 0 \mod n$,
and $1/i \not \equiv 1/j \bmod n$ for $\sigma_i, \sigma_j \in J$;
this implies finally that $\prod_{\sigma_i, \sigma_j \in J} \left(
  \frac{1}{i} - \frac{1}{j}\right) \not\equiv 0 \mod n$, which
confirms our claim.
\end{proof}

\section{Proof of Theorem \ref{tbin}}

Theorem 4 in \cite{Mi1} proves that if CF holds, then \rf{dn} has no solution except for \rf{part}. The computations in \cite{BH} prove that CF holds for $n\leq 163.10^6$. This proves Theorem \ref{tbin}.\rf{A}.
Theorem \ref{tbin}.\rf{C} is also proved in Theorem 4 in \cite{Mi1}. In the sequel we shall show that the
only possible solutions are $X = \pm B/n^e + 1$. We may assume in particular that $n > 163 \cdot 10^6$.

We have already proved that $X-1 = B \cdot C^n/n^e$. If $C = \pm 1$,
then $X - 1 = \pm B/n^e$, as stated in point \rf{B} of Theorem \ref{tbin} and
$X$ is a solution of \rf{dn}. The bounds on $| X |$ in \rf{Bvals}
imply $| B | < n^n$, the second claim of \rf{B}.

Consequently, Theorem \ref{bin} will follow if we prove that $C = \pm
1$; we do this in this section. Assume that there is a prime $p | C$ with $p^{i} || C$. 
Let $\eu{P} \subset \Z[\zeta]$ be a prime ideal lying
above $p$ and let $d(p) \subset G$ be its decomposition group. We shall use
Remark \ref{4} in order to derive some group ring elements which cancel 
the exponents $\kappa$ occurring in \rf{kappa}. 

Recall that $D = B \cdot C^n/n^e = X-1$, with $C$ defined by
\rf{y}. Note that \rf{y} implies that either $(n, D) = 1$, or $n^2 |
B$ and $(n, C) = 1$. Indeed, if $(n, D) \neq 1$, then $e = 1$ and $n^2
| n^e (X-1) = B C^n$ and since $(C, n) = 1$, it follows that $n^2 |
B$; the last relation follows from the bounds $C^n \leq E < 4
(n-2)^n$, hence $| C | < n$. In both cases $1/(1 - \zeta)$ is
congruent to an algebraic integer modulo $D/n^{v_n(D)} \cdot
\Z[\zeta]$.

According to Remark \ref{4}, we know that there are at least two elements, $\sigma^{'}_1, \sigma^{'}_2 \in
d(p)$ such that $\sigma^{'}_1 \neq \jmath \cdot \sigma^{'}_2$. Let
$\sigma^{'}_i(\zeta) = \zeta^{c_i}, \ c_i \in \ZMs{n}$. It
follows from Lemma \ref{sm} that, for $c_i \neq c_j$ when $i \neq j$, there are $h'_1, h'_2 \in \Z$ with $|
h'_i | \leq \sqrt{n}$ and $\sum_{i=1}^2 h'_i c_i \equiv 0 \bmod n$ while
$\sum_{i=1}^2 h'_i / c_i \not \equiv 0 \bmod n$.

We define 
\begin{eqnarray}
\label{mudef}
\begin{array}{l c l}
\mu & = & \sum_{i=1}^2 h_i \sigma_i \in \Z[ d(p) ] \subset \Z[ G ], \quad \hbox{with} \\
 h_i & = & \begin{cases} 
		h'_i & \hbox{if $h'_i > 0$ and }\\
		- h'_i & \hbox{otherwise}, \quad \quad \quad \hbox{and}
          \end{cases} \\
\sigma_i & = & \begin{cases} 
		\sigma^{'}_i & \hbox{if $h'_i > 0$ and }\\
		\jmath \sigma^{'}_i & \hbox{otherwise}.
          \end{cases}
\end{array}
\end{eqnarray}
By construction, $\mu$ is a positive element, i.e. the coefficients
$h_i \geq 0$.  Let $\widehat{ \cdot } : G \ra \ZMs{n}$ denote the
cyclotomic character and note that $h'_i \widehat{\sigma} = h_i
\widehat{\sigma'}$ for $h'_i < 0$ and thus $\phi^{(1)}(\mu) =
0$. In view of Lemma \ref{sm}, we also know that we can choose the
$h'_i$ and thus $\mu$, such that
\[ \phi^{(1)}(\mu) = 0, \quad \hbox{ but } \quad \phi^{(-1)}(\mu) \neq 0. \]

Since $\K/\Q$ is abelian, all the primes $\eu{P} | (p)$ have the same
decomposition group $d(p)$ and $\mu$ enjoys the following stronger
property: let $\eu{P} | (p)$ and $S \subset G$ be a set of
representatives of $G/d(p)$; let $\gamma \in \Z[\zeta]$ be such that
$\gamma \equiv \zeta^{c_{\sigma}} \bmod \sigma (\eu{P} )$ for all
$\sigma \in S$; then $\gamma^{\mu} \equiv 1 \bmod p \Z[\zeta]$, as
follows directly from $\zeta^{\mu} \equiv 1 \bmod \sigma(\eu{P})$, for
all $\sigma \in S$.

In view of Lemma \ref{simple} and the fact that Fueter elements are positive, 
we also know that there is a $\theta_0 \in I_f^+$ such that 
$\varsigma(\theta_0) = 2$ and $\phi^{(-1)} (\theta_0) \neq 0$.

Let
\begin{equation*}
  \Theta  =  2 \cdot \mu \cdot \theta_0.
\end{equation*}

In view of the properties \rf{phiprop} of moments and since for both 
$\mu, \theta_0$, the Fermat quotient vanishes, while $\phi^{(-1)}$ 
is non-null, it follows that the same must hold for $\Theta$, 
so $\Theta \in 2 \cdot I_f^+$ and $\phi^{(-1)} (\Theta)
\neq 0$. Let 
\[ h = 2 \cdot \sum_{i=1}^l |h_i| = 2 . w( \mu ) , \]
where we defined the \textit{absolute weight} $w(\sum_c n_c \sigma_c)
= \sum_c | n_c |$. From subsection \ref{SecStick}, we know that there
exists a Jacobi integer $\beta[ 2 \theta_0 ] \in \Z[\zeta]$ such that
$\beta[ 2 \theta_0 ]^n = (\zeta^{c_X} (1-\zeta)^{1-e})^{\theta_0} \cdot
\left(1 + \frac{X-1}{1-\zeta}\right)^{\theta_0}$ (see \rf{beta1}). It
follows from \rf{base1} and Lemma \ref{theta}, that in both cases we
have $\beta[ 2 \theta_0 ] \equiv \zeta^{\kappa(\theta_0)} \cdot Y^4 
\bmod \eu{P}$. We have chosen $\mu$ as a linear combination of two
elements from the decompostion group $D(\eu{P}) \subset G$, so $\mu$
acts on $\zeta \bmod \eu{P}$ by $\zeta \mod \eu{P} \mapsto \zeta^{\mu} \equiv 1 \bmod \eu{P}$.
Therefore, from $\beta[ \Theta ] = \beta[ 2 \theta_0 ]^{\mu}$ and thus, 
by the choice of $\mu$, we have
\begin{eqnarray}
\label{unif}
\beta[ \Theta ] \equiv  Y^h \bmod p \Z[ \zeta ].
\end{eqnarray}
Let $\Theta = 2 \sum_{c = 1}^{n-1} n_c \sigma_c$; for any prime
$\eu{P} | (p)$, the binomial series of the \nth{n} root of the right
hand side in \rf{pow} converges in the $\eu{P}$ - adic valuation and
its sum is equal to $\beta[ \Theta ]$ up to a possible \nth{n} root of
unity $\zeta^c$. Here we make use of the choice of $\Theta$: comparing
\rf{unif} with the product above, it follows that $\zeta^c = 1$ for
all primes $\eu{P} \ | \ (p)$. For any $N > 0$, we have $p^{i n N} | |
D^N$ and thus
\begin{eqnarray}
\label{beta}
  \beta[ \Theta ]  \equiv  Y^h \prod_{c=1}^{n-1} \left(\sum_{k=0}^{N-1}
    \binom{n_c/n}{k} \left(\frac{D}{1-\zeta^c}\right)^k \right) \quad \bmod p^{i n N}. 
\end{eqnarray}

We develop the product in a series, obtaining an expansion which
converges uniformly at primes above $p$ and is Galois covariant; for
$N < n-1$ and $\sigma \in G$, we have:
\begin{eqnarray*}
\label{inhom}
\beta[ \sigma \Theta ] = Y^h \left(1 + \sum_{k = 1}^{N-1}
\frac{b_k[ \sigma \Theta ]}{(1-\zeta)^k n^k  k!} \cdot D^k \right) + O
(p^{i n N}),
\end{eqnarray*}
with $b_k[ \Theta ] \in \Z[\zeta]$.  Let $P \subset \{ 1, 2, \cdots ,
n-1 \}$ be a set of cardinal $1 < N < (n-1)/2$ such that if $c \in P$ then
$n-c \not \in P$, and $J \subset \Z[G]$ be the Galois automorphisms of
$\K$ indexed by $P$: $J=\{ \sigma_c \}_{c \in P}$. Consider the linear
combination $\Delta = \sum_{\sigma \in J} \lambda_{\sigma} \cdot
\beta[\sigma \cdot \Theta]$ where $\lambda_{\sigma} \in \Q[\zeta]$
verify the linear system:
\begin{eqnarray}
\label{hom}
\sum_{\sigma \in J} & & \lambda_{\sigma} \cdot b_k[\sigma \cdot \Theta] =  0 ,
\text{ for } k = 0, \ldots, N-1,\  k \neq \lceil N/2 \rceil \nonumber
\quad \text{ and } \\ 
\sum_{\sigma \in J} & &  \lambda_{\sigma} \cdot b_{\lceil N/2 \rceil}[\sigma \cdot \Theta]  =  (1-\zeta)^{\lceil N/2 \rceil} n^{\lceil N/2 \rceil} \lceil N/2 \rceil !. 
\end{eqnarray}
Applying Lemma \ref{dvpt} we observe that this system is regular for
any $N < n-1$. There exists therefore a unique solution which is not
null.

We recall that a power series $\sum_{k=0}^{\infty} a_k X^k \in
\C[[X]]$ is dominated by the series $\sum_{k=0}^{\infty} b_k X^k \in
\R[[X]]$ with non-negative coefficients, if for all $k \geq 0$, we
have $|a_k| \leq b_k$. The dominance relation is preserved by addition
and multiplication of power series.

Following the proof of Proposition 8.2.1 in \cite{Bilu2}, one shows
that if $r \in \R_{>0}$ and $\chi \in \C$, with $|\chi| \leq K$ with
$K \in \R_{>0}$, then the binomial series $(1+\chi T)^r$ is dominated
by $(1- K T)^{-r}$. From this, we obtain that $(1+ \chi T)^{\Theta/n}$ is
dominated by $(1 - K T)^{-w(\Theta)/n}$. In our case \rf{beta}, $T=D$, $\chi = \frac{1}{1 - \zeta^c}$ and 
\[ K = \max_{1 \leq c < n} | 1/(1-\zeta^c) | = 1/\sin(\pi/n) \leq
n/\pi \cos(\pi/3) = 2n/\pi < n . \] Applying this to our selected
$\Theta$, whose absolute weight is bounded by $w \leq 4 n \sqrt{n}$,
we find after some computations that $|b_k[\sigma \cdot \Theta]| <
n^{k} \cdot \binom{-w/n}{k} \cdot k! < n^{3k}$ for $N < n/2$.

Let $A = \det \left( b_k[\sigma_c \cdot \Theta ] \right)_{k=0; c \in
  I}^{N-1} \neq 0$ be the determinant of the matrix of the system
\rf{hom}, which is non vanishing, as noticed above: note that the
division by $k!$ along a complete row does not modify the regularity
of the matrix.

Let $\vec{d} = (1-\zeta)^{\lceil N/2 \rceil} n^{\lceil N/2 \rceil} \lceil N/2 \rceil !  \left( \delta_{k,\lceil N/2
    \rceil}\right)_{k=0}^{N-1} $, where $\delta_{i,j}$ is Kronecker's
symbol. The solution to our system is $\lambda_\sigma = A_{\sigma} /
A$, where $A_\sigma \in \Z[\zeta]$ are the determinants of some minors
of $\left( b_k[\sigma_c \cdot \Theta ] \right)_{k=0;c \in I}^{N-1}$
obtained by replacing the respective column by $\vec{d}$.

Noticing that $|(1-\zeta)^{\lceil N/2 \rceil} n^{\lceil N/2 \rceil} \lceil N/2 \rceil !| < n^{3(N-1)}$, Hadamard's inequality implies that
\begin{eqnarray*}
  |A_\sigma| & \leq & n^{3(N-1)(N-2)/2} \cdot (N-1)^{(N-1)/2} \leq n^{3N^2/2} \cdot N^{N/2} \quad \hbox{and} \\
  |A| & \leq & n^{3N^2/2} \cdot  N^{N/2}
\end{eqnarray*}
 Let $\delta = A \cdot \Delta \in \Z[\zeta]$, 
 \[ \delta = \sum_{\sigma \in J} A_{\sigma} \cdot \beta[\sigma \cdot
 \Theta] \in \Z[\zeta].  \]

We set $N = \lceil n^{3/4} \rceil$ and claim that for such $N$, $\delta \neq 0$. By choice of the $\lambda$'s, we have $\delta = A.p^{in \lceil N/2 \rceil}.u  + p^{i n N} z$ for some $z \in \Z[ \zeta ]$, where $u = \frac{D^{\lceil N/2 \rceil}}{ p^{in \lceil N/2 \rceil}}.Y^h$ is a unit in $(\Z/p\Z)^*$. Therefore, if we assume that $\delta = 0$, then necessarily $p^{in \lceil N/2 \rceil}$ divides $A$. However, $v_p(A) < n \lceil N/2 \rceil$. Indeed, the upper bound for $| A |$ implies a fortiori that $v_p(A) \leq \lceil N/2 \cdot \log N + \frac{3 N^2}{2}\log n \rceil$.  Then, the assumption $\delta = 0$ would imply $n \leq 3\left[ n^{3/4} + \frac{1}{4}\right] \log n$, which is false for $n \geq 4,5.10^6$. This contradicts thus our initial assumption. Therefore $\delta \neq 0$.

 Given the bounds on $A_{\sigma}$, we obtain $|\delta| \leq N Y^h
 n^{3N^2/2} \cdot N^{N/2}$ and using the fact that $h < 4n^{1/2}$, $Y
 < n^n$ (Theorem \ref{tbin}.\rf{B}) and $N = \lceil n^{3/4} \rceil$, we find
\begin{eqnarray*}
\label{up}
| \Norm_{\K/\Q}(\delta) | < \left( n^{\frac{11}{2} n^{3/2} + \frac{3}{8} n^{3/4} + 
    \frac{3}{4}}\right) ^{n-1}.
\end{eqnarray*}
The initial homogenous conditions in \rf{hom} imply $\delta \equiv 0
\bmod p^{i n \lceil N/2 \rceil }$, therefore $| \Norm_{\K/\Q}(\delta)
| \geq p^{i n (n-1) N/2}$.  Combining this inequality with \rf{up} and
$n \geq 163 \cdot 10^6$, one finds that $\log p < 1.64$. This shows
that $p= 2, 3$ or $5$.

We consider the case $p \leq 5 $ separately as follows. We choose in this case $\mu = 1 + p \jmath
\sigma_p^{-1}$ and verify that $\varphi(\mu) = 0$, while
$\phi^{-1}(\mu) = 1 - p^2 \not \equiv 0 \bmod n$. Consequently
$\varsigma(\Theta) = 4(p+1)$ and the norm of $\delta$ is thus bounded by
\begin{eqnarray*}
p^{n (n-1) N/2} \leq  | \Norm_{\K/\Q}(\delta) | < \left( n^{4(p+1) + 3N^2/2} \cdot N^{N/2+1} \right)^{n-1}.
\end{eqnarray*}
Letting $N = 48$, we obtain the inequality
\[ 2^n \leq n^{73} \cdot 48^{25/24} < 64 n^{73} \quad \Rightarrow
\quad \frac{n-6}{73} \leq \log(n)/\log(2), \] which is false for for
$n > 695$, and a fortiori for $n > 163.10^{6}$. We obtain a contradiction
in this case too, and thus $C = \pm 1$, which completes the proof of
Theorem \ref{tbin}.
\qed

\section{Consequences for the general case of the binary Thue equation  \rf{bin}}
In this section we derive Theorem \ref{genexpo}. For this we
assume that \rf{bin} has a solution with $(\varphi^*(B),n) = 1$, since
our results only hold in this case, a fact which is reflected also in
the formulation of Theorem \ref{genexpo}.

Consider the case when $n = p \cdot q$ is the product of two distinct
primes. If $(n , B) = 1$, then Theorem \ref{tbin} holds for both $p$
and $q$ with the value $e = 0$. If $X, Z$ is a solution, then Theorem
\ref{tbin} . \rf{B} implies that $X^p = \pm B + 1$ and $X^q = \pm B
+1$. Consequently either $X^p + X^q = 2$ or $X^p - X^q = 2$. This is
impossible for $| X | > 2$ and a simple case distinction implies that
there are no solutions. As a consequence,

\begin{corollary}
\label{c1}
Consider Equation \rf{bin} for fixed $B$ and suppose that $n$ is
an integer which has two distinct prime divisors $p > q > 2$ with $(p, B) =
( q, B ) = 1$. Then \rf{bin} has no solutions for which \rf{nosplit} holds.
\end{corollary}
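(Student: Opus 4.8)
The plan is to reduce to the two prime exponents $p$ and $q$, exactly as in the case $n=pq$ treated above, and then to rule out the resulting configuration by an elementary size argument. (Equivalently, one could first raise $(X,Z)$ to the power $n/(pq)$ to land in the case $n=pq$; I shall argue directly.)

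First I would take a non-trivial solution $(X,Z)$ of \rf{bin} for the exponent $n$ with \rf{nosplit} holding, so that $|Z|\geq 2$. Since $p$ and $q$ are distinct primes dividing $n$, we have $pq\mid n$, so $n/p$ and $n/q$ are integers, each $\geq 3$; the pairs $(X^{n/p},Z^{n/p})$ and $(X^{n/q},Z^{n/q})$ are then solutions of \rf{bin} for the prime exponents $p$ and $q$ respectively, and both are non-trivial, since $|Z^{n/p}|=|Z|^{n/p}\geq 2$ and likewise for $q$. From \rf{nosplit} together with $p,q\mid n$ one gets $(\varphi^{*}(B),p)=(\varphi^{*}(B),q)=1$, so the arithmetic hypotheses of Theorem \ref{tbin} hold for both exponents. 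It remains to check that neither induced solution is the one excluded in Theorem \ref{tbin}: for $p$ this is automatic, because $p>q>2$ forces $p\geq 5$ whereas the excluded solution occurs at exponent $3$; and if $q=3$ with $(X^{n/q},Z^{n/q};B)=(18,7;17)$, then $X^{n/3}=18$, which forces $n/3=1$ since $18$ is not a perfect $k$-th power for any $k\geq 2$, hence $n=3$, contradicting $p\mid n$ with $p\geq 5$. So Theorem \ref{tbin} applies to both exponents.

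Next I would invoke Theorem \ref{tbin}.\rf{B}. For the exponent $p$ it gives $X^{n/p}-1=\pm B/p^{e_p}$ with $e_p\in\{0,1\}$; the case $e_p=1$ would force $p\mid B$, contradicting $(p,B)=1$, so $e_p=0$ and $X^{n/p}=1\pm B$; likewise $X^{n/q}=1\pm B$. Because a non-trivial solution satisfies $|X|^n+1\geq|X^n-1|=B|Z|^n\geq 2^{n+1}$, we have $|X|\geq 3$. As $B\geq 2$, both $|X|^{n/p}$ and $|X|^{n/q}$ lie in the two-element set $\{B-1,B+1\}$; and since $p>q$ gives $n/p<n/q$ while $|X|\geq 3>1$, we have $|X|^{n/p}<|X|^{n/q}$, so these are precisely the two elements in increasing order: $|X|^{n/p}=B-1$ and $|X|^{n/q}=B+1$. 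But then $2=|X|^{n/q}-|X|^{n/p}=|X|^{n/p}(|X|^{n/q-n/p}-1)\geq |X|^{n/p}(|X|-1)\geq 3\cdot 2$, which is absurd. Hence \rf{bin} has no non-trivial solution under \rf{nosplit}.

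The real content is carried entirely by Theorem \ref{tbin}.\rf{B}; the only points I expect to need care are the determination of the exponent $e$ (disposed of by $(p,B)=(q,B)=1$) and the exclusion of the exceptional Nagell--Ljunggren solution $(18,7;17)$ when $q=3$ (ruled out since $18$ is not a higher power and $p\geq 5$). No estimate from Section~3 is used.
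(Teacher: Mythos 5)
Your proof is correct and follows essentially the same route as the paper: reduce to the prime exponents $p$ and $q$ via the induced solutions $(X^{n/p},Z^{n/p})$ and $(X^{n/q},Z^{n/q})$, apply Theorem \ref{tbin}.\rf{B} with $e=0$ (forced by $(p,B)=(q,B)=1$) to get $X^{n/p},X^{n/q}\in\{1+B,1-B\}$, and derive an elementary size contradiction. You are in fact more careful than the paper on the side conditions (non-triviality of the induced solutions, exclusion of the exceptional solution $(18,7;17)$ when $q=3$, and the explicit bound $|X|\geq 3$), which the paper passes over with ``a simple case distinction.''
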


If all divisors of $n$ are among the primes dividing $B$, we are led
to the following equation: $p (X^q - 1) = q (X^p - 1)$, which has no
solutions in the integers other than $1$. Indeed, assume $X \neq 1$ to be a
solution of the previous equation, and $q=p+t,\ t \geq 0$. The
real function $f(t) = p(X^{p+t}-1) - (p+t)(X^p-1)$ is strictly
monotonous and $f(0)=0$.  Therefore, the equation $p (X^q - 1) = q
(X^p - 1)$ has no solutions. There is only the case left in which $n$
is built from two primes, one dividing $B$ and one not. In this case,
one obtains that equation $p ( X^q - 1) = X^p - 1$ which can also be
shown not to have non trivial solutions, using the above remark, this
time with $f(t) = p(X^{p+t}-1) - (X^p-1)$. Hence:
\begin{corollary}
\label{c2}
The equation \rf{bin} has no solutions for exponents $n$ which are
divisible by more than one prime and for $B$ such that \rf{nosplit} holds.
\end{corollary}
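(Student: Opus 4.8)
The plan is to reduce to the case $n = pq$ with $p < q$ distinct primes, apply Theorem \ref{tbin}.\rf{B} to the two prime exponents separately, eliminate the parameter $B$, and conclude by a monotonicity argument on the resulting one-variable exponential equation. Suppose, for contradiction, that \rf{bin} has a non-trivial solution $(X, Z)$ for some $n$ divisible by two distinct primes $p \neq q$, with $B$ satisfying \rf{nosplit}, and put $m = n/(pq)$. Then $\bigl(X^m\bigr)^{pq} - 1 = X^n - 1 = B Z^n = B \bigl(Z^m\bigr)^{pq}$, so $(X^m, Z^m)$ is again a non-trivial solution of \rf{bin}, now for the exponent $pq$, and \rf{nosplit} is inherited since $(pq, \varphi^{*}(B))$ divides $(n, \varphi^{*}(B)) = 1$. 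Hence it suffices to rule out $n = pq$.

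Raising $X$ and $Z$ to powers, $(X^q, Z^q)$ solves \rf{bin} with prime exponent $p$ and $(X^p, Z^p)$ solves it with prime exponent $q$, with \rf{nosplit} holding in each case. I would apply Theorem \ref{tbin}.\rf{B} to each; writing $e_p, e_q \in \{0, 1\}$ for the corresponding values of the exponent $e$, this gives signs $\varepsilon_p, \varepsilon_q \in \{\pm 1\}$ with $p^{e_p}(X^q - 1) = \varepsilon_p B$ and $q^{e_q}(X^p - 1) = \varepsilon_q B$. As in the derivation of \rf{y}, $e_p = 1$ holds exactly when $p \mid B$ (and likewise for $q$): if $e_p = 1$ then $X^q \equiv 1 \bmod p$, so $p^2 \mid \varepsilon_p B$, whereas if $e_p = 0$ then $p \nmid (X^q - 1) = \pm B$. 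When $(n, B) = 1$ this forces $e_p = e_q = 0$, recovering Corollary \ref{c1}; the remaining cases are those in which a prime of $n$ divides $B$ --- which, when $2 \mid n$, is automatic, since \rf{nosplit} then forces $B$ to be a power of $2$. Dividing the two relations cancels $B$ and gives $p^{e_p}(X^q - 1) = \pm\, q^{e_q}(X^p - 1)$.

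For $|X| \leq 1$ the solution is trivial, so assume $|X| \geq 2$ and write $q = p + t$ with $t \geq 1$. Up to sign, the last relation has one of the shapes $a(X^{p+t} - 1) = b(X^p - 1)$ with $(a,b) \in \{(1,1),\,(p,1),\,(1,\,p+t),\,(p,\,p+t)\}$; for each I would, exactly as in the computation already used for Corollary \ref{c1}, consider the real function $f(s) = a(X^{p+s} - 1) - b(X^p - 1)$ (allowing $b$ to depend on $s$ through $p+s$ in the last two shapes), note that $f(0) = 0$, and check that $f$ is strictly monotone on $[0,\infty)$ for every fixed $X$ with $|X| \geq 2$, so that $f(t) \neq 0$ for $t \geq 1$. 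The residual sign ambiguity and the sign of $X$ --- the latter interacting with the parities of $p$ and $q$, notably when $2 \mid n$ --- are dispatched by a short finite case distinction resting on the estimate $|X^q| \geq 2|X^p|$ for $|X| \geq 2$. This contradicts the assumed solution, proving the corollary. I expect the only genuinely delicate point to be the bookkeeping of the two auxiliary equations $p(X^q - 1) = \pm q(X^p - 1)$ and $p(X^q - 1) = \pm(X^p - 1)$ --- in particular, making sure the coefficient $q^{e_q}$ can never be a higher power of $q$, which is precisely what the sharp form $X - 1 = \pm B/n^e$ of Theorem \ref{tbin}.\rf{B} together with the equivalence ``$e = 1$ iff the base prime divides $B$'' secures --- and the routine sign analysis for $X \leq -2$.
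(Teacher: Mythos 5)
Your proposal is correct and follows essentially the same route as the paper: reduce to $n=pq$, apply Theorem \ref{tbin}.\rf{B} to the solutions $(X^q,Z^q)$ and $(X^p,Z^p)$ for the prime exponents $p$ and $q$, eliminate $B$ (with the dichotomy $e=1$ iff the prime divides $B$ yielding the same four shapes $p^{e_p}(X^q-1)=\pm q^{e_q}(X^p-1)$ the paper treats as three cases), and conclude by a growth/monotonicity comparison. Your version is in fact slightly more careful than the paper's on the reduction from general composite $n$ to $n=pq$, on the signs $\pm$, and on the case $X\leq -2$, which the paper's real-variable function $f(t)$ does not literally cover.
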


We are left to consider the case of prime powers $n = p^c$ with $c >
1$. If $p \nmid B$, we obtain $X^{n/p} - 1 = B/p^e$, so in particular
$B/p^e+1 \geq 2^{p^{c-1}}$ is a \nth{p^{c-1}} power. Since in this
case, \rf{bin} has in particular a solution for the exponent $p$, the
Theorem \ref{tbin} implies that $B < p^p$; when $c > 2$, combining this with the
previous lower bound implies that there are no solutions.
For $c = 2$, we deduce that $| X | < p$ and, after applying the
Theorem \ref{tbin} again and letting $\xi = \zeta^{1/p}$ be a
primitive \nth{p^2} root of unity, we obtain the equation
\[ Y^{p^2} = \frac{X^{p^2} - 1}{p^e(X^p-1)} = \Norm_{\Q[ \xi ]/\Q}
(\alpha) \quad \quad \quad \alpha = \frac{X-\xi}{(1-\xi)^e} .\] As
usual, the conjugates of the ideal $(\alpha)$ are pairwise coprime. We
let $\eu{A} = (Y, \alpha)$ be an ideal with $N(\eu{A}) = (Y)$; moreover,
if $\eu{L} | \eu{A}$ is a prime ideal and $N(\eu{L}) = (\ell)$, then
the rational prime $\ell$ is totally split in $\Q[ \xi ]$, the factors
being the primes $(\ell, \sigma_c(\alpha))$. Being totally split, it
follows in particular that $\ell \equiv 1 \bmod p^2$ so $Y \geq \ell >
2 p^2$, in contradiction with $Y < X < p+1$. This shows that there are
no solutions for $n = p^2$. \qed

\begin{corollary}
  If the Equation \rf{bin} in which $n = p^c$ is a prime power has non
  trivial solutions for which \rf{nosplit} holds, then $c = 1$.
\end{corollary}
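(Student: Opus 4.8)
The statement is the last residual case of the discussion of Section~4, and here is the proof I would give. Suppose $n=p^{c}$ with $c\geq 2$ and that \rf{bin} admits a non-trivial solution $(X,Z)$ satisfying \rf{nosplit}; I aim at a contradiction. From $(\varphi^{*}(B),n)=1$ and $p\mid n$ we get $(\varphi^{*}(B),p)=1$, and non-triviality forces $|X|\geq 2$ (otherwise $|X^{n}-1|\leq 2$ while $|BZ^{n}|\geq 2^{n+1}$), so $(X^{p^{c-1}},Z^{p^{c-1}})$ is a non-trivial solution of \rf{bin} for the \emph{prime} exponent $p$. The exceptional solution $(18,7;17)$ of the exponent-$3$ case cannot arise in this way, since $18$ is not a $p^{c-1}$-th power; hence Theorem \ref{tbin} applies, and points \rf{A} and \rf{B} yield $p>163\cdot 10^{6}$ together with $X^{p^{c-1}}-1=\pm B/p^{e}$, $e\in\{0,1\}$, and $B<p^{p}$.

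I would first treat $p\nmid B$. Then $e=0$ and $X^{p^{c-1}}-1=\pm B$, so $|X|^{p^{c-1}}<p^{p}+1$. For $c\geq 3$ the left side is at least $2^{p^{2}}$, which exceeds $p^{p}+1$, a contradiction; so $c=2$ and $|X|\leq p$. Feeding this back into \rf{bin} with $B=\pm(X^{p}-1)$ gives $Y^{p^{2}}=(X^{p^{2}}-1)/(X^{p}-1)=\Norm_{\Q[\xi]/\Q}(\alpha)$, where $\xi$ is a primitive $p^{2}$-th root of unity, $\alpha=X-\xi$, and $Y=|Z|\geq 2$. As usual the conjugate ideals of $(\alpha)$ are pairwise coprime: their pairwise differences are associates of $1-\xi$, and $p\nmid\Norm(\alpha)=Y^{p^{2}}$ because $X\not\equiv 1\bmod p$. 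Coprimality of the conjugates forces any rational prime $\ell\mid Y$ to split completely in $\Q[\xi]$, hence $\ell\equiv 1\bmod p^{2}$ and $\ell\geq p^{2}+1$. But $Y^{p^{2}}=(X^{p^{2}}-1)/(X^{p}-1)<|X|^{p^{2}}$ gives $Y<|X|\leq p<p^{2}+1\leq\ell\leq Y$, which is absurd.

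For $p\mid B$ the argument is parallel. Here $e=1$ (if $e=0$ then $p\mid X^{p^{c-1}}-1$, so $X\equiv 1\bmod p$, forcing $e=1$), whence $B=\pm p(X^{p^{c-1}}-1)$ with $X\equiv 1\bmod p$; as before $B<p^{p}$ excludes $c\geq 3$ and gives $|X|<p$ for $c=2$. Since $X\equiv 1\bmod p$ and $p$ is odd, the lifting-the-exponent lemma gives $v_{p}((X^{p^{2}}-1)/(X^{p}-1))=1$, so $(X^{p^{2}}-1)/(p(X^{p}-1))=Y^{p^{2}}$ is an integer coprime to $p$ and equals $\Norm_{\Q[\xi]/\Q}((X-\xi)/(1-\xi))$; the identical splitting argument then produces the contradiction. (Alternatively, $p\mid B$ and $p\nmid\varphi^{*}(B)$ put $n=p^{c}$ outside $\id{N}(B)$ and outside the shape $p'm$ allowed by Theorem \ref{genexpo}, which already forbids a solution.) The two cases together force $c=1$. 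The only step that is not purely formal is $c=2$, where one genuinely needs that a prime dividing the cyclotomic quotient splits completely in $\Q[\xi]$; everything else is size bookkeeping driven by Theorem \ref{tbin}, which the lower bound $p>163\cdot 10^{6}$ makes routine.
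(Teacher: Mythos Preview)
Your argument is correct and follows the same route as the paper: pass to the prime exponent $p$ via $(X^{p^{c-1}},Z^{p^{c-1}})$, invoke Theorem~\ref{tbin}.\rf{B} to obtain $X^{p^{c-1}}-1=\pm B/p^{e}$ and $B<p^{p}$, discard $c\geq 3$ by size, and for $c=2$ use that every rational prime dividing $Y$ splits completely in $\Q[\xi]$, forcing $Y>p^{2}$ against $Y<|X|\leq p$. You are in fact more explicit than the paper in separating out the case $p\mid B$ (the paper writes only ``If $p\nmid B$'' while keeping the $p^{e}$ in its formulae); one small caveat is that your parenthetical ``Alternatively \dots\ Theorem~\ref{genexpo}'' is circular, since Theorem~\ref{genexpo} is derived from the present corollary---but your main argument does not rely on it.
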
 

\qed

The primes dividing the exponent $n$ used in the above corollaries are
by definition coprime to $\varphi^*(B)$. As a consequence, if $n$ is
an exponent for which \rf{bin} has a solution and $m | n$ is the
largest factor of $n$ with $m \in \id{N}(B)$ -- as defined in
\rf{expos} -- then the corollaries imply that there is at most one
prime dividing $n/m$ and the exponent of this prime in the prime
decomposition of $n$ must be one. This is the first statement of
Theorem \ref{genexpo}, which thus follows from these corollaries and
Theorem \rf{bin}.

\section{Appendix}
The proof of Theorem \ref{tbin} is based on results from \cite{Mi1}. It has
been pointed out that the proof of Theorem 3 in \cite{Mi1} may require
some more detailed explanation in the case of a singular system of
equations in the proof of Lemma 14 of \cite{Mi1}. Since the statements
of \cite{Mi1} are correct and can even be slightly improved, while the
explanations may have seemed insufficient, we provide here for the readers
interested to understand the technicalities of the proofs in \cite{Mi1}
some additional details and explanation, confirming those claims and results.

\subsection{Clarification on the singular case of Theorem 3 in \cite{Mi1}}
Let $m \in \Z_{>0}$ be a positive integer, $\K$ a field, $V = \K^{m}$
as a $\K$-vector space and let $L \subsetneq V$ be a proper subspace of $V$ of dimension $r$. We
assume that there exists at least one vector $w_1 \in L$ which is free
of $0$-coefficients over the canonical base $\id{E}$. For $(x, y) \in V^2$, $x=(x_1, \cdots, x_m)$ and $y=(y_1,\cdots,y_m)$, the
Hadamard product is defined by $[ x, y ] = (x_1y_1, \cdots, x_m y_m)$. For
any subspace $W \subset V$ we define the $W$-\textit{bouquet} of $L$ by
\[ L_W = \langle \ \ \{ \ [ w, x ] \ : \ w \in W, \ x \in L \ \} \ \
\rangle_{\K} ,\] the $\K$-span of all the Hadamard products of
elements in $W$ by vectors from $L$.

%In a complex-vector space in finite dimension, we choose the standard Hermitian inner-product ($<x,y> = \sum_{k} x_k \bar{y}_k$). Fix an orthonormal basis $\id{E}_L=(e_1, e_2, \cdots, e_{r})$ of $L$, which we complete to an orthonormal basis of $V$, $\id{E}=(e_1, e_2, \cdots, e_{r}, e_{r+1}, \cdots,e_{m})$,

\begin{lemma}
\label{bouquet}
Let $a_1 = (1, 1, \ldots, 1)$ over $\id{E}$, and $e_2 \in V$ such that its coordinates be pairwise distinct over $\id{E}$. Let $A_2 = \langle \{ a_1, a_2\} \rangle_{\K}$ be
 the subspace generated by $e_1,e_2$. Let $L_{A_2}$ be the
resulting $A_2$-bouquet. Then $dim(L_{A_2}) > dim(L)$.
\end{lemma}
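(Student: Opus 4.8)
\subsection*{Proof proposal}

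The plan is to establish the strict inequality by contradiction. Assume $\dim(L_{A_2}) = \dim(L)$; I will show this forces $L$ to be spanned by a proper subset of the canonical basis $\id{E}$, which is incompatible with the standing hypothesis that $L$ contains a vector $w_1$ with no zero coordinate.

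I would begin with the elementary observation that $a_1 = (1,1,\ldots,1)$ is the neutral element for the Hadamard product, so $[a_1, x] = x$ for every $x \in V$. Since $a_1 \in A_2$, this already gives $L \subseteq L_{A_2}$, hence $\dim(L_{A_2}) \geq \dim(L)$ unconditionally, and the lemma amounts to ruling out equality. (The hypothesis that the coordinates of $a_2 = e_2$ over $\id{E}$ are pairwise distinct also ensures, for $m \geq 2$, that $A_2$ is genuinely two-dimensional, since no vector with pairwise distinct coordinates can be a scalar multiple of $a_1$.)

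Now suppose $\dim(L_{A_2}) = \dim(L)$, so that $L_{A_2} = L$. Taking $w = a_2$ in the definition of the bouquet shows $[a_2, x] \in L$ for all $x \in L$; that is, $L$ is an invariant subspace of the diagonal operator $\tilde{T}\colon V \to V$, $\tilde{T}(x) = [a_2, x]$, whose matrix in the basis $\id{E}$ is $\mathrm{diag}\big((a_2)_1, \ldots, (a_2)_m\big)$. Because the entries $(a_2)_i$ are pairwise distinct, $\tilde{T}$ has $m$ distinct eigenvalues in $\K$, with eigenvectors exactly the canonical basis vectors, and its minimal polynomial $\prod_{i=1}^{m}(X - (a_2)_i)$ is a product of distinct linear factors. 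The minimal polynomial of the restriction $\tilde{T}|_L$ divides that of $\tilde{T}$, so it too is a product of distinct linear factors over $\K$, whence $\tilde{T}|_L$ is diagonalizable and $L$ splits as a direct sum of the spaces $L \cap \K\mathbf{e}_i$. Each of these is either $0$ or the full line $\K\mathbf{e}_i$, so $L = \langle \mathbf{e}_i : i \in S\rangle_{\K}$ for some $S \subseteq \{1,\ldots,m\}$ with $|S| = \dim(L)$.

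Finally, $L \subsetneq V$ is proper, so $S$ is a proper subset and there is an index $j \notin S$; then every vector of $L$ has vanishing $j$-th coordinate, contradicting the existence of $w_1 \in L$ free of $0$-coefficients. Hence $\dim(L_{A_2}) > \dim(L)$. The only point beyond routine bookkeeping is the structural fact that an invariant subspace of a diagonalizable operator with simple spectrum is a span of eigenvectors; everything else is immediate. I expect this lemma to serve as the inductive step in a separate argument showing that repeatedly passing to $A_2$-bouquets eventually produces all of $V$.
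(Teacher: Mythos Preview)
Your proof is correct and takes a genuinely different route from the paper's. The paper argues constructively: it observes that the $m$ vectors $w_1, [w_1,a_2], [w_1,a_2^2], \ldots, [w_1,a_2^{m-1}]$ (Hadamard powers) are linearly independent by a Vandermonde argument, so since $\dim L < m$ there is a least index $j$ with $[w_1,a_2^j]\in L$ but $[w_1,a_2^{j+1}]\notin L$; the latter vector then witnesses $L_{A_2}\supsetneq L$ explicitly. You instead characterise the obstruction structurally: if $L_{A_2}=L$ then $L$ is invariant under the diagonal operator $x\mapsto [a_2,x]$, and an invariant subspace of a diagonalizable operator with simple spectrum must be spanned by standard basis vectors, contradicting the existence of $w_1$. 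The paper's approach has the minor practical advantage of exhibiting a concrete new vector, which is what the subsequent application actually uses (one adjoins this vector as an extra row to a matrix); your argument is cleaner and replaces the ad hoc Vandermonde computation by a standard linear-algebra principle.
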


\begin{proof}
  Obviously, $L \subset L_{A_2}$ (as $a_1 \in A$). We would like to
  show that $L_{A_2} \neq L$. We know that the system $(w_1,
  [w_1,a_2], [w_1,a_2^2], \cdots, [w_1,a_2^{m-1}])$ (the notion of
  power of a vector here is to be understood as an "Hadamard power")
  is free (as it induces a Vandermonde matrix over $\id{E}$, 
  $w_1$ does not have any zero among its coordinates and all
  coordinates of $a_2$ are pairwise distinct). We know that $w_1 \in
  L$; let us assume that $[w_1,a_2^i] \in L$ for $i \leq j$ (we know
  that $j \leq  m - r < m$). Then, $[w_1,a_2^j] \in L$ and
  $[w_1,a_2^{j+1}] \notin L$. However, the Hadamard product of
  $[w_1,a_2^j] \in L$ by $a_2$, that is $[w_1,a_2^{j+1}]$, belongs to
  $L_{A_2}$. Thus, $\dim L_{A_2} > \dim L$.
\end{proof}

\subsubsection{Application of Lemma \ref{bouquet} to the proof
    of the singular case in the argument on pages 266 -- 270 of \cite{Mi1}}

We apply here the lemma in the first case (that is $x \not\equiv s
\mod p$, where $s \in \{ -1, 0, 1\}$), the application to the second
case being similar.

Let all notation be like in Lemma 14 in \cite{Mi1}. As in \cite{Mi1},
we will assume that $\rg{A} = \left( \zeta^{-\kappa_{ac}/a}
\right)^{(p-1)/2}_{a,c=1}$ (where $\kappa_{ac}$ are the \textit{Galois
  exponents}) is singular. Let $m = (p-1)/2$, $\K = \Q(\zeta_p)$ and
$r = \rk (\rg{A}) < (p-1)/2$. Without loss of generality, we assume
that a regular $r$-submatrix of $\rg{A}$ is built with the first $r$
rows and the first $r$ columns. Therefore, the first $r$ rows of
$\rg{A}$ are independent, and we denote by $W$ the sub-space of
$V=\K^{m}$ generated by the first $r$ row vectors $w_1, \cdots, w_r$
of $\rg{A}$. For $a_1 = (1, 1, \ldots, 1)$, we let $a_2$ be the vector
of $V$ whose components are \footnote{\text{In the context of \cite{Mi1}, $\eta$ corresponds to $b_1[\theta]$ in our context}} $\left( \eta (\sigma_c
  \theta)\right)_{c=1}^{(p-1)/2}$ and $A_2=\{ a_1, a_2 \}$. Then,
according to Lemma \ref{bouquet}, there exists at least one vector
$\vec{v} \in L_{A_2}$ which is independent on the first $r$ vectors of
$\rg{A}$.

Let $\rg{S}$ be the $(r+1) \times (r+1)$ submatrix of $\rg{A}$
comprising the first $r$ rows and $r+1$ columns of $\rg{A}$, to which
we have added an additional row: the first $r+1$ components of
$\vec{v}$. Let $\vec{\lambda}'$ be the vector solution of $\rg{A}
\vec{\lambda}' = \vec{d}'$, where $\vec{d}' = \left( \delta_{c,r+1}
\right)_{c=1}^{r+1}$. We know that $\vec{\lambda}' \neq \vec{0}$, as
$\rg{S}$ is regular and $\vec{d}'$ is not the null vector. For $1 \leq
c \leq r + 1$, by Cramer's rule, $\lambda_c = \frac{S_c}{S}$, where
$S_c$ are the determinants of some minors of $\rg{S}$ obtained by
replacing the $c$-column by $\vec{d}'$, and $S = \det \rg{S}$.

Let $\vec{\lambda} \in V$ be a vector whose first $r+1$ coordinates
are those of $\vec{\lambda}'$ and the others are $0$. Let $\left(
  \delta_{c,r+1} \right)_{c=1}^{m}$. Then, $\vec{\lambda}$ verifies:
$\rg{A} \vec{\lambda} = \vec{d}$.

Let $\delta = \sum_{c=1}^{r+1} \left( \lambda_c \cdot \beta_c +
  \overline{\lambda_c \cdot \beta_c} \right)$. Using Hadamard's
inequality, we bound $|S_c| \leq \left(
  \frac{p-3}{2}\right)^{\frac{p-3}{4}} = D_1$ and $|S| \leq \left(
  \frac{p-1}{2}\right)^{\frac{p-1}{4}} = D_0$. Then, using the fact
that the choice of $\lambda_c$ eliminates the first term in the
expansion of $f_c$, we find that $|S| \cdot |\delta| \leq 2
x^{(p-1)/2p} \cdot \sum_{c=1}^{r+1} |S_c| |R_{c,0} (x)|$, where
$R_{c,0} (x) = f_c(x) - x^{(p-1)/2p}$. With the same arguments
as in \cite{Mi1}, we deduce: $$|S \delta| < 2(p-1) D_1 \cdot
\frac{1}{|x|^{(p+1)/2p}}.$$ This inequality holds for all conjugates
$\sigma_c(\delta)$, thus leading to:

\begin{equation*}
  \left|\Norm\left(S \delta\right)\right| <  \left( 2 (p-1) D_1\right)^{(p-1)/2} 
\cdot \frac{1}{|x|^{\frac{(p-1)(p+1)}{4p}}}.
\end{equation*}

If $\delta \neq 0$, then $\left|\Norm\left(S \delta\right)\right| \geq
1$ and thus $|x| \leq 2^{5-p} \left( \frac{p}{2}
\right)^{\frac{p}{2}}$. If $\delta = 0$, then $0 = S \delta = S \cdot
|x|^{(p-1)/2} - \sum_{c=1}^{(p-1)/2} S_c R_{0,c}$, and thus:

\begin{equation*}
  |x| \leq \sum_c |S_c|/|S| < (p-1) D_1 < 3
  \left(\frac{p-3}{2}\right)^{(p-3)/2} .
\end{equation*}
These bounds are better than the ones in \cite{Mi1}, and this
concludes the clarification.

\subsection{Proof of Lemma \ref{simple}}
\begin{proof}
  Let $\theta = \sigma_w \psi_u + \sigma_z \psi_v$. The conditions
  required by the lemma lead to the following linear system of
  equations over $\F_n$:
\begin{equation}
\label{sys1}
\left\{
\begin{array}{lcllcl}
  w \cdot \varphi (\psi_u) & + &  z \cdot \varphi (\psi_v) & = & 0 \\
  1/w \cdot \phi^{(-1)} (\psi_u) & + & 1/z \cdot \phi^{(-1)} (\psi_v) & \neq & 0
\end{array}
\right.
\end{equation}
Considered as a linear system in the unknowns $w, z \in \F_n$, the
above system has the matrix $M = \left( \begin{array}{c c} \varphi
    (\psi_u) & \varphi (\psi_v)\\ \phi^{(-1)} (\psi_v) & \phi^{(-1)}
    (\psi_u) \end{array}\right)$. Assume that the product $P(t) =
\varphi(\psi_t) \cdot \phi^{(-1)}(\psi_t)$ is not constant for all $t
\in \ZMs{n}$. Then there are two elements $u, v \in \ZMs{n}$ such that
$P(u) \neq P(v)$; for such values $u, v$, the matrix $M$ is regular
over $\F_p$ and for any non vanishing right hand side in the second
equation, the system has a unique solution $(w, z)$. For this choice
of $u, v; w, z$, the element $\theta = \sigma_w \psi_u + \sigma_z
\psi_v$ satisfies the condition of the lemma.

We now show that $P(t) : \ZMs{n} \ra \F_p$ is not a constant
function. The proof uses explicit computations which include divisions
by several constants which must be assumed to be non - null. Therefore
we suppose that $n \not \in E := \{ 3, 7 \}$ and shall verify
independently that the claim of the lemma holds for this exceptional
set.

Let $\varphi$ be the Fermat quotient map and $\Theta_k$ be the \nth{k}
Fuchsian.  For any integer $1 < k < n-1$, we have:

\begin{equation*}
\begin{array}{lcll}
(n-k)^n - (n-k) & \equiv & -k^n - n + k & \mod n^2 \\
 & \equiv & -n \left( \frac{k^n - k}{n} + 1 \right) & \mod n^2 .
\end{array}
\end{equation*}
Dividing both terms by $n$ and recalling from Lemma \ref{Jha} that 
$\varphi(\Theta_k) = \varphi(k) \equiv \frac{k^n - k}{n}  \bmod n$, we find:
\begin{equation}
\label{FuchsEq}
\varphi (\Theta_{n-k}) = n - \left( 1 + \varphi(\Theta_k) \right).
\end{equation}

Using now \rf{voronoi} from Lemma \ref{Jha}, with $m=2$, we find that:
\begin{equation*}
\phi^{(-1)} (\Theta_k) \equiv \frac{k^3 - k}{2k^2} B_{2}  \equiv \frac{1}{12}\cdot 
\left(k - \frac{1}{k} \right)  \mod n,
\end{equation*}
where we used the fact that $B_2 = 1/6$. Finally, using that $\psi_k =
\Theta_{k+1} - \Theta_k$ for $k > 1$ while $\psi_1 = \Theta_2$, we
obtain the following expressions for the moments of interest:
\begin{eqnarray*}
  \varphi(\psi_k) & = & \varphi(k+1)-\varphi(k), \\
  \phi^{(-1)}(\psi_k) & \equiv & \frac{1}{12} \cdot \left( 1 + \frac{1}{k(k+1)}\right).
\end{eqnarray*}
Note that $\phi^{(-1)}(\psi_k) = 0$ iff $k^2 + k + 1 = 0$; if $n
\equiv 1 \bmod 6$, the equation has two solutions in $\F_n$, otherwise
it has none. In the latter case $\phi^{(-1)}(\psi_k) \neq 0$ for all
k.

We shall assume that $P$ is the constant function and shall show that
this assumption fully determines the Fermat quotient of integers in
dependence of $\varphi(2)$, and this determination is in contradiction
with \rf{FuchsEq}; the contradiction implies that $P$ cannot be
constant, thus completing the proof.

Let thus $C = \varphi(2) \cdot \phi^{(-1)}(\Theta_2) = \varphi(2)
\cdot \frac{1}{8}$. Assume first that $\varphi(2) = 0$ and recall from
\rf{FuchsEq} that $\varphi(k) + \varphi(n-k) + 1 = 0$.  Therefore at
least $\frac{n-1}{2}$ of the values of $\varphi$ are
non-vanishing. Since $\phi^{(-1)}(k) \cdot (\varphi(k+1)-\varphi(k)) =
0$ for all $k$ we see that if $n \not \equiv 1 \bmod 6$, then
$\varphi$ is constantly vanishing, which is impossible.

If $n \equiv 1 \bmod 6$, let $l, m \in \F_n$ be the non trivial third
roots of unity, so $\phi^{(-1)}(\psi_l) = \phi^{(-1)}(\psi_m) = 0$,
while for all $k \not \in \{l, m\}$ we must have $\varphi(k+1) =
\varphi(k)$. In particular, if $l < m$, there are two integers $a, b$
such that
\[ \varphi(2) = 0 = \ldots = \varphi(l); \quad \varphi(l+1) = a =
\ldots = \varphi(m); \quad \varphi(m+1) = b = \ldots = \varphi(n-1).\]
But $\varphi(n-1) = -1$ while $\varphi(n-2) = -1 - \varphi(2) = -1$,
so $b = -1$. For symmetry reasons induced by \rf{FuchsEq}, we must
have $a = -1/2$ and $m = n-l$. This is absurd since $m^3 \equiv 1
\bmod n$ implies $l^3 = (n-m)^3 \equiv -m^3 \equiv -1 \bmod n$, so $n
= 2 \not \equiv 1 \bmod 6$. Thus $\varphi(2) \neq 0$ in this case
too. Since $\phi^{(-1)}(l) = 0$, it follows however that $C =
\varphi(l) \cdot \phi^{(-1)}(l) = 0$ and thus $C = 0 = \varphi(2) / 8$
and we should have $\varphi(2) = 0$, in contradiction with the facts
established above.  Consequently, if $n \equiv 1 \bmod 6$, then $P$
cannot be constant.

We consider now the case $n \not \equiv 1 \bmod 6$, in which we know
that $C \neq 0$. By expressing $C = P(2) = P(k)$ we obtain the
following induction formula
\begin{eqnarray*}
  C = \frac{1}{12} \cdot \frac{3 \varphi(2)}{2} & = & 
\frac{1}{12} (\varphi(k+1) - \varphi(k)) \cdot \frac{k^2 + k + 1}{k(k+1)}, \quad \hbox{hence} \\
  \varphi(k+1) - \varphi(k) & = & \frac{3 \varphi(2)}{2} \cdot \frac{k(k+1)}{k^2 + k + 1}, \\
  \varphi(3) - \varphi(2) & = & \frac{9}{7} \varphi(2) \quad \Rightarrow \quad \varphi(3) = \frac{16}{7} \varphi(2).
\end{eqnarray*}
By eliminating $\varphi(2)$ from the above identity for two successive
values of $k$ one finds
\[ \varphi(k+1) = \frac{2k^3}{k^3-1} \cdot \varphi(k) +
\frac{k^3+1}{k^3-1} \cdot \varphi(k-1). \]

We shall use the reflexion formula \rf{FuchsEq} between the last and
the first values in the sequence $1, 2,\ldots, n-2, n-1$. Letting $k =
n-2$ in the above induction, we find
\begin{eqnarray*}
  -1 & \equiv & \varphi(n-1) \equiv \frac{16}{9} \cdot \varphi(n-2) + 
\frac{7}{9} \cdot \varphi(n-3) \\
& \equiv & \frac{16}{9} \cdot(-1- \varphi(2)) + \frac{7}{9} \cdot (-1-\varphi(3)) \bmod n, \\
  9 & \equiv & 16 + 16 \varphi(2) + 7 + 7 \varphi(3) \equiv 23 + 
(16+7 \cdot \frac{16}{7} ) \varphi(2) \bmod n, \quad \hbox{hence} \\
  -7 & \equiv &  16  \cdot \varphi(2)  \bmod n.
\end{eqnarray*}
Consequently $\varphi(2) \equiv -\frac{7}{16} \bmod n$ and thus
$\varphi(3) \equiv \frac{16}{7} \varphi(2) \equiv -1 \bmod n$. But
then \rf{FuchsEq} implies that $\varphi(n-3) = -1-\varphi(3) = 0$, and
thus $C = 0$, in contradiction with the previously obtained non
vanishing fact. This confirms that $P(t)$ is non constant in
this case too.

It remains to verify the claim for the exceptional primes in $E$. For
$n = 3$ the Stickelberger ideal is trivial, so there is nothing to
prove. For $n = 7$ one can repeat the proof of the case $n \equiv 1
\bmod 6$, which requires no division by $7$; this completes the proof
of the Lemma.
\end{proof}

\thanks{\textbf{Acknowledgements}: \textit{The first author is grateful to the Universities of Bordeaux and
  G\"ottingen for providing a stimulating environment during the
  development of this work. Both authors thank Mike Bennett and
  Kalman Gy\H{o}ry for suggesting this interesting problem for an
  algebraic investigation.}}


\begin{thebibliography}{ZZZZZZ}

\bibitem[Be]{Be} M. A. Bennet: {\em Rational Approximation To
    Algebraic Numbers Of Small Height: The Diophantine Equation
    $|ax^n-by^n|=1$}, J. Reine Angew. Math., \textbf{535},
  pp. 1--49 (2001).

\bibitem[BGMP]{BGMP} M. A. Bennett, K. Gy\H{o}ry, M. Mignotte and
  \'A. Pint\'er: {\em Binomial Thue equations and polynomial powers},
  Compositio Math. \textbf{142}, pp. 1103--1121 (2006).

\bibitem[BBM]{Bilu} Y. Bilu, Y. Bugeaud and M. Mignotte: {\em The
    problem of Catalan}, Springer (2014).

\bibitem[Bilu]{Bilu2} Y. Bilu: {\em Catalan's conjecture (after
    Mih\u{a}ilescu)}, S\'eminaire Bourbaki, Expos\'e 909, 55\'eme
  ann\'ee (2002-2003); Ast\'erisque 294, pp. 1--26 (2004).

\bibitem[BHM]{BHM} Y. Bugeaud, G. Hanrot and M. Mignotte: {\em Sur
    l'\'{e}quation diophantienne $\frac{x^{n}-1}{x-1} = y^{q}$}, III,
  Proc. London. Math. Soc. \textbf{84}, pp. 59--78 (2002).

\bibitem[BH]{BH} J. P. Buhler and D. Harvey: {\em Irregular primes to
    163 million}, Math. Comp., \textbf{80:276}, pp. 2435--2444 (2011).

\bibitem[G]{G} K. Gy\H{o}ry: {\em Personal communication.}

\bibitem[Jha]{Jha} V. Jha: {\em The Stickelberger Ideal in the Spirit
    of Kummer with Application to the First Case of Fermat's Last
    Theorem}, Queen's University, Queen's papers in pure and applied
  mathematics \textbf{93} (1993).

\bibitem[Mi1]{Mi2} P. Mih\u{a}ilescu: {\em Primary cyclotomic units
    and a proof of Catalan's conjecture}, in \textit{J. Reine
    Angew. Math.} \textbf{572}, pp. 167--195 (2004).

\bibitem[Mi2]{Mi1} P. Mih\u{a}ilescu: {\em Class Number Conditions for
    the Diagonal Case of the Equation of Nagell and Ljunggren}, in
  \textit{Diophantine Approximation}, Springer Verlag, Development in
  Mathematics \textbf{16}, pp. 245--273 (2008).

\end{thebibliography}
\end{document}